\def\cal{\mathcal}
\def\Bbb{\mathbb}
\newenvironment{pf*}[1]{\proof[#1]}{\endproof}
\newcommand{\rom}{\textup}
\renewcommand{\thesubsection}{\thesection(\@roman\c@subsection)}
\newtheorem{Theorem}[equation]{Theorem}
\newtheorem{Lemma}[equation]{Lemma}
\newtheorem{Proposition}[equation]{Proposition}
\theoremstyle{definition}
\newtheorem{Definition}[equation]{Definition}
\renewcommand\section{\@startsection{section}{1}%
  {\z@}{.7\linespacing\@plus\linespacing}{.5\linespacing}%
  {\reset@font\normalfont\bfseries\centering}}
\theoremstyle{remark}
\newtheorem{Remark}[equation]{Remark}
\newtheorem*{Acknowledgements}{Acknowledgements}
\numberwithin{equation}{section}
\numberwithin{figure}{section}
\newcommand{\thmref}[1]{Theorem~\ref{#1}}
\newcommand{\secref}[1]{\S\ref{#1}}
\newcommand{\lemref}[1]{Lemma~\ref{#1}}
\newcommand{\propref}[1]{Proposition~\ref{#1}}
\newcommand{\remref}[1]{Remark~\ref{#1}}
\newcommand{\subsecref}[1]{\S\ref{#1}}
\newcommand{\Romnum}[1]{\expandafter\uppercase\expandafter{\romannumeral #1}} 
\newcommand{\C}{{\Bbb C}}
\newcommand{\Z}{{\Bbb Z}}
\newcommand{\R}{{\Bbb R}}
\newcommand{\HH}{{\Bbb H}}
\newcommand{\RP}{\operatorname{\R P}}
\newcommand{\SO}{\operatorname{\rm SO}}
\newcommand{\U}{\operatorname{\rm U}}
\newcommand{\Pin}{\operatorname{\rm Pin}}
\newcommand{\Spin}{\operatorname{\rm Spin}}
\newcommand{\Spinc}{\Spin^{c}}
\newcommand{\SP}{\operatorname{\rm Sp}}
\newcommand{\Map}{\operatorname{Map}}
\newcommand{\Hom}{\operatorname{Hom}}
\newcommand{\Ext}{\operatorname{Ext}}
\newcommand{\im}{\mathop{\text{\rm im}}\nolimits}
\newcommand{\ad}{\operatorname{ad}}
\newcommand{\rank}{\operatorname{rank}}
\newcommand{\sign}{\operatorname{sign}}
\newcommand{\real}{\operatorname{Re}}
\newcommand{\ind}{\mathop{\text{\rm ind}}\nolimits}
\newcommand{\tr}{\operatorname{tr}}
\newcommand{\M}{{\cal M}} 
\newcommand{\G}{{\cal G}} 
\newcommand{\A}{{\cal A}} 
\newcommand{\CC}{{\cal C}} 
\newcommand{\WW}{{\cal W}} 
\newcommand{\VV}{{\cal V}} 
\newcommand{\OO}{{\operatorname{O}}} %
\newcommand{\K}{{\cal K}} %
\newcommand{\muh}{\hat{\mu}}
\newcommand{\Lie}{\operatorname{Lie}}
\newcommand{\X}{\tilde{X}}
\newcommand{\ct}{\tilde{c}}
\begin{document}
\title[$\Pin^-(2)$-monopole equations and intersection forms with local coefficients]{$\Pin^-(2)$-monopole equations and intersection forms with local coefficients of $4$-manifolds}
%
%
%
\author{Nobuhiro Nakamura}
\address{Graduate School of Mathematical Sciences, University of Tokyo, 3-8-1, Komaba, Meguro-ku, Tokyo, 153-8914, Japan}
\email{nobuhiro@ms.u-tokyo.ac.jp}
%
%
\begin{abstract}
We introduce a variant of the Seiberg-Witten equations, $\Pin^-(2)$-monopole equations, and give its applications to intersection forms with local coefficients of $4$-manifolds.
The first application is an analogue of Froyshov's results on $4$-manifolds with definite intersection forms with local coefficients.
The second is a local coefficient version of Furuta's $10/8$-inequality.
As a corollary, we construct nonsmoothable spin $4$-manifolds satisfying Rohlin's theorem and the $10/8$-inequality.
\end{abstract}
\keywords{4-manifold, intersection form, Seiberg-Witten equations}
\subjclass[2010]{57R57}
%
%
%
%
\maketitle
%
%
%
%
\section{Introduction}\label{sec:intro}
%
%

K.~Froyshov \cite{Froyshov} recently proved theorems on intersection forms with local coefficients of $4$-manifolds which can be considered as a local coefficient analogue of Donaldson's theorem for definite $4$-manifolds \cite{D1,D2}.
To prove his results, he analyzes the moduli space of $\SO(3)$-instantons, and effectively make use of the existence of a kind of reducibles, {\it twisted reducibles}, whose stabilizers are $\Z/2$, in order to extract the information on local coefficient cohomology.

The first part of this paper proves an analogue of Froyshov's results by Seiberg-Witten theory.
In fact, we prove that, if a closed smooth $4$-manifold has a definite intersection form with local coefficient, it should be the standard form.

To state the precise statement, we give some preliminaries.
Let $X$ be a closed, connected, oriented smooth $4$-manifold. 
Suppose a double covering $\X$ of $X$ is given. 
Let $l=\X\times_{\{\pm 1\}}\Z$ and $\lambda=\X\times_{\{\pm 1\}}\R$ be its associated bundles with fiber $\Z$ and $\R$.
We can consider the cohomology $H^*(X;l)$ with $l$ as bundle of coefficients.
Since $l\otimes l=\Z$, we have a homomorphism by the cup product, 
$$
H^2(X;l)\otimes H^2(X;l) \to H^4(X;\Z)=\Z.
$$
This induces a unimodular quadratic form $Q_{X,l}$ on $H^2(X;l)/\text{torsion}$.
Let $b_q(X;l)$ be the $l$-coefficient $q$-th Betti number, i.e., 
$$b_q(X;l) = \rank H^q(X;l)/\text{torsion}.$$ 
The ordinary  $\Z$-coefficient Betti numbers are denoted by  $b_q(X)$.
The short exact sequence of bundles,
$$
0\to l\overset{\cdot 2}{\to} l\to\Z/2\to 0,
$$ 
induces a long exact sequence,
$$
\cdots\to H^q(X;l)\overset{\cdot 2}{\to}  H^q(X;l)\to  H^q(X;\Z/2)\to  H^{q+1}(X;l)\to \cdots.
$$
In particular, mod $2$ reduction map $H^2(X;l)\to H^2(X;\Z/2)$ is defined.

Our first theorem is as follows:
\begin{Theorem}\label{thm:main}
Let $X$ be a closed, connected, oriented smooth $4$-manifold. 
Suppose that a nontrivial $\Z$-bundle $l\to X$ satisfies the following{\rom :}
\begin{enumerate}
\item The intersection form $Q_{X,l}$ is definite.
\item Let $\lambda = l\otimes\R$. Then $w_1(\lambda)^2$ has a lift in the torsion part of $H^2(X;l)$.
\end{enumerate}
Then $Q_{X,l}$ is isomorphic to the diagonal form.
\end{Theorem}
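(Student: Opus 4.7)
Since the introduction explicitly advertises \thmref{thm:main} as a local-coefficient analogue of Donaldson's diagonalization theorem and flags that the proof will be via Seiberg-Witten theory, the strategy is to mimic the classical Seiberg-Witten proof of Donaldson's theorem, with the $\Pin^-(2)$-monopole equations promised in the paper's title playing the role of the ordinary Seiberg-Witten equations. After replacing $l$ by its dual if necessary, I will assume $Q_{X,l}$ is negative definite.

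The first step is to check that, for every characteristic element $c$ of $Q_{X,l}$, condition (2) permits the construction of a $\Pin^-(2)$-spin$^c$-type structure on $X$ whose associated "determinant" class equals $c$ modulo torsion. Condition (2) should be exactly the obstruction to the existence of such a twisted structure, analogous to the classical requirement that $w_2(X)$ be the mod $2$ reduction of an integral class: here, because the virtual Chern line is twisted by $\lambda$, the obstruction involves $w_1(\lambda)^2$ rather than $w_2(X)$ alone, and demanding a torsion lift in $H^2(X;l)$ lets us realize any prescribed characteristic $c \in H^2(X;l)/\text{torsion}$.

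Next, for each such structure I would write down the expected dimension of the $\Pin^-(2)$-monopole moduli space as a formula in $c \cdot c$, the local-coefficient Betti numbers $b_q(X;l)$, and the signature $\sigma_l$, parallel to the usual $(c^2-2\chi-3\sigma)/4$. The key input is the existence of \emph{twisted reducibles} with $\Z/2$ stabilizer coming from the $\Pin^-(2)$-action---exactly the feature Froyshov exploited in the $\SO(3)$-instanton setting. Since $Q_{X,l}$ is negative definite we have $b_2^+(X;l)=0$, so the moduli space collapses onto a neighborhood of the twisted reducible locus, which is essentially a Picard-type torus. A $\Pin^-(2)$-equivariant finite-dimensional-approximation argument in the spirit of Furuta should then force
$$ c \cdot c \;\ge\; -\, b_2(X;l) $$
for every characteristic $c$.

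Finally, with this inequality holding for every characteristic element of the unimodular lattice $(H^2(X;l)/\text{torsion},\,Q_{X,l})$, Elkies' characterization of the standard diagonal lattice finishes the proof. The main obstacle will be the middle step: setting up the twisted-reducible/$\Pin^-(2)$-equivariant dimension count in enough generality that the inequality really applies to \emph{every} characteristic element, rather than to one distinguished class. Carrying this out requires precisely the $\Pin^-(2)$-monopole machinery the paper develops, together with a Froyshov-style exploitation of the $\Z/2$-stabilizers to extract the characteristic-class constraint from the equivariant fundamental class of the reducible locus.
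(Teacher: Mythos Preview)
Your plan matches the paper's proof: realize every characteristic element as $\tilde c_1(E)$ of some $\Spin^{c_-}$-structure (condition~(2) enters precisely here, via the existence criterion $w_2(X)=[\tilde c_1(E)]_2+w_1(\lambda)^2$ combined with a Wu-formula lemma), use the unique $\Z/2$-reducible in the $\Pin^-(2)$-monopole moduli space to force the virtual dimension nonpositive and hence $|c^2|\ge b_2(X;l)$, then invoke Elkies. Two minor slips: negative-definiteness is arranged by reversing the orientation of $X$ (dualizing $l$ does nothing, since $l\cong l^*$), and the residual equivariance in the finite-dimensional approximation is only $\Z/2$ (or $\Z/4$ when $\tilde c_1(E)=0$), not the full $\Pin^-(2)$.
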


The proof of \thmref{thm:main} is outlined as follows.
For the double covering $\X$ associated with $l$, let $\iota\colon\X\to\X$ be the covering transformation.
We consider a $\Spin^c$-structure $\tilde{c}$ on $\X$ together with an isomorphism (of order $4$) between the pullback $\Spin^c$-structure $\iota^*\tilde{c}$ and the complex conjugation of $\tilde{c}$.
In fact, if we start from a $\Spin^{c_-}$-structure on $X$, a $\Pin^-(2)$-variant of  $\Spin^c$-structure introduced in \secref{sec:spin-p}, we obtain an antilinear involution $I$ covering $\iota$ on the spinor bundles and the determinant line bundle of $\tilde{c}$.
Then, $I$ acts on the Seiberg-Witten moduli space $\tilde{\M}$ of $(\X,\tilde{c})$, and we pay attention to its $I$-invariant part $\tilde{\M}^I$.
In fact, on the $\Spin^{c_-}$-structure on $X$, we can define a variant of Seiberg-Witten equations, {\it $\Pin^-(2)$-monopole equations} we call, and we can identify the moduli space of solutions of the $\Pin^-(2)$-monopole equations, $\M$, with the $I$-invariant Seiberg-Witten moduli space   $\tilde{\M}^I$.
The rest of the argument is analogous to the argument in the alternative proof of Donaldson's theorem by the Seiberg-Witten theory (see e.g. \cite{Moore,Nicolaescu}).
That is, under the assumptions of \thmref{thm:main}, we prove the virtual dimension of $\M\cong\tilde{\M}^I$ cannot be greater than $b_1(X;l)$, and obtain an inequality for the characteristic elements of $Q_{X,l}$. 
Finally, we invoke a theorem of Elkies \cite{Elkies} to prove the form should be the standard form.

In the second part of the paper, the technique of finite dimensional approximation due to Furuta and Bauer \cite{Furuta,BF} is applied to  the {\it $\Pin^-(2)$-monopole map}, and we prove a $10/8$-type inequality for intersection forms with local coefficients:
\begin{Theorem}\label{thm:10/8}
Let $X$ be a closed connected oriented smooth $4$-manifold.
For any nontrivial $\Z$-bundle $l$ over $X$ which satisfies $w_1(\lambda)^2 = w_2(X)$, the following inequality holds{\rom :} 
$$b_+(X;l)\geq -\frac{\sign(X)}{8}.$$
\end{Theorem}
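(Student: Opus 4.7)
The plan is to adapt the Furuta--Bauer finite-dimensional approximation to the $\Pin^-(2)$-monopole map introduced for \thmref{thm:main}. The assumption $w_1(\lambda)^2 = w_2(X)$ ensures the existence of a $\Spin^{c_-}$-structure on $X$, which supplies the data for the equations.

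First, I would assemble the $\Pin^-(2)$-monopole map, after gauge fixing and Sobolev completion, as a Fredholm-plus-compact map
$$\mu\colon \Omega^1(X; i\R\otimes\lambda)\oplus\Gamma(S^+)\longrightarrow \Omega^+(X; i\R\otimes\lambda)\oplus\Gamma(S^-)$$
modulo the constant gauge. The internal $\Pin^-(2)$-symmetry acts as follows: on spinor bundles, $S^1\subset\Pin^-(2)$ acts by scalar multiplication while $j$ acts antilinearly with $j^2=-1$, giving $\HH$-type representations; on the twisted real forms $\Omega^*(X; i\R\otimes\lambda)$, $S^1$ acts trivially and $j$ acts by $-1$, giving $\tilde\R$-type representations. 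After establishing standard a priori bounds (the nonlinear terms are formally identical to ordinary Seiberg--Witten), Bauer--Furuta finite-dimensional approximation produces a $\Pin^-(2)$-equivariant map
$$\mu_0\colon \HH^{a_1}\oplus\tilde\R^{b_1'}\longrightarrow \HH^{a_2}\oplus\tilde\R^{b_2'}$$
whose restriction to the fixed-point subspace is proper outside a compact set.

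Next I would compute the two virtual dimensions via the Atiyah--Singer index theorem. The quaternionic-dimension drop $a_1-a_2$ is the quaternionic Dirac index, most easily computed on the double cover $\X$: lifting to the $\Spin^c$-structure $\ct$ gives $\ind_\C\tilde D=-\sign(\X)/8=-\sign(X)/4$ (using $\sign(\X)=2\sign(X)$), and the antilinear involution $I$ with $I^2=-1$ equips kernel and cokernel with quaternionic structures, so $\ind_\HH D=-\sign(X)/8$. The $\tilde\R$-dimension jump $b_2'-b_1'$ equals $b_+(X;l)$, accounting for harmonic $\lambda$-twisted self-dual $2$-forms as cokernel of the twisted $d^+$. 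The $\Pin^-(2)$-equivariant K-theoretic inequality of Furuta then yields $a_1-a_2\leq b_2'-b_1'$ for any such map whose restriction to the fixed-point subspace is proper, giving the desired
$$-\sign(X)/8\leq b_+(X;l).$$

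The main obstacle is the careful bookkeeping of the $\Pin^-(2)$-representation content of the finite-dimensional approximations. One must confirm that the form spaces contribute only $\tilde\R$-type summands (not a mix with trivial $\R$, which would weaken the inequality), and correctly handle any $b_1(X;l)$-dimensional contribution from harmonic $1$-forms on the domain side, which should cancel against harmonic $0$-form cokernel on the codomain. Ensuring that the $\Pin^-(2)$-equivariant compactness of $\mu^{-1}(0)$ translates into properness of $\mu_0$ on the fixed-point subspaces is essential for the K-theoretic inequality to apply in the desired form; the (possible) absence of the improvement ``$+1$'' present in the classical Furuta inequality is a reflection of the twisted-reducible structure peculiar to the local-coefficient setting.
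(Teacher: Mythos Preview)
Your overall strategy---Bauer--Furuta finite-dimensional approximation of the $\Pin^-(2)$-monopole map, followed by an equivariant $K$-theoretic obstruction---is the paper's strategy, and your identification of the form summands as $\tilde\R$-type and your numerical index bookkeeping are essentially correct. However, the symmetry group you invoke is wrong, and this is a genuine gap.

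There is no residual $\Pin^-(2)$-action on the $\Pin^-(2)$-monopole map. The name refers to $\Pin^-(2)$ as a factor of the \emph{structure group} $\Spin^{c_-}(4)=\Spin(4)\times_{\{\pm1\}}\Pin^-(2)$, not to a global symmetry. Concretely, the constant gauge transformations in $\G_0=\Gamma(\X\times_{\{\pm1\}}\U(1))$ are only $\{\pm1\}$, since a constant $z\in\U(1)$ must satisfy $z=\bar z$ to descend; so there is no $S^1$ acting by scalar multiplication on spinors. When $\tilde c_1(E)=0$ (i.e.\ $E\cong\underline\R\oplus\lambda$, which is exactly the case forced by $w_2(X)=w_1(\lambda)^2$), the extra constant section $j\in\G_1$ exists and, together with $\{\pm1\}$, generates a $\Z/4$. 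Hence the finite-dimensional approximation is only $\Z/4$-equivariant, of the form
\[
f\colon \tilde\R^{m}\oplus\C_1^{\,n+k}\longrightarrow \tilde\R^{\,m+b}\oplus\C_1^{\,n},
\]
with $b=b_+(X;l)$ and $k=-\sign(X)/8$; the spinor summands are $\C_1$ (the generator acts by $i$), not $\HH$. Consequently Furuta's $\Pin^-(2)$-equivariant inequality is unavailable. The paper instead complexifies $f$ and applies tom Dieck's character formula for the $K$-theoretic degree at the generator $g\in\Z/4$: since $V_g=W_g=0$ one gets $\tr_g(\alpha_f)=2^{\,b-k}$, and integrality forces $b\geq k$. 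Your proposal would go through once you replace the nonexistent $\Pin^-(2)$-symmetry and $\HH$-representations by this $\Z/4$ argument.
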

\begin{Remark}\label{rem:10/8}
(1) In the proof of the $10/8$-inequality by Furuta \cite{Furuta}, the existence of an extra $\Pin^-(2)$-action on the Seiberg-Witten theory on the $\Spin^c$-structure associated with a spin structure plays an essential role.
Analogously, a key point of the proof of \thmref{thm:10/8} is the existence of an extra gauge symmetry.
In fact,  there is a larger gauge symmetry on the $\Spin^{c_-}$-structure whose associated $\OO(2)$-bundle is $\underline{\R}\oplus \lambda$ (\subsecref{subsec:G}), and such a $\Spin^{c_-}$-structure exists if the condition $w_1(\lambda)^2 = w_2(X)$ is satisfied (\propref{prop:spin-p}). \\
(2) Note that $\alpha\cup \alpha = Sq^1(\alpha)$ for $\alpha\in H^1(X;\Z/2)$, and $Sq^1$ is the Bockstein connecting homomorphism associated with coefficient sequence
\begin{equation}\label{eq:coef}
0\to \Z/2\to\Z/4\to\Z/2\to 0.
\end{equation}
(\cite{Switzer}, 18.12.)
 For instance, if $w_2(X)$ has an integral lift of order $2$, then $w_2(X) = \alpha\cup \alpha $ holds for some $\alpha$.
This follows from comparing the Bockstein sequence associated with \eqref{eq:coef} with another Bockstein sequence associated with the sequence
$$
0\to \Z\overset{\times 2}{\to}\Z\to\Z/2\to 0.
$$
(3) As mentioned above, the proof of \thmref{thm:10/8} use the $\Pin^-(2)$-monopole map.
In fact, the $\Pin^-(2)$-monopole map can be considered as the $I$-invariant part of the Seiberg-Witten map of the double covering $\X$. 
Therefore, we can prove  \thmref{thm:10/8} by applying the finite dimensional approximation technique directly to the Seiberg-Witten equations on $\X$ with the $I$-action.\\  
(4) We will give an alternative proof of \thmref{thm:main} by using the same technique used in the proof of \thmref{thm:10/8}.
\end{Remark}

As an application of \thmref{thm:main} and \thmref{thm:10/8}, we construct nonsmoothable $4$-manifolds satisfying known constraints on smooth $4$-manifolds.

Let us consider the spin cases.
For smooth spin $4$-manifolds, we know two fundamental theorems, Rohlin's theorem(see e.g.\cite{Kirby}) and  Furuta's theorem \cite{Furuta}.
Rohlin's theorem tells us that the signature of every closed spin $4$-manifold is divisible by $16$. 
On the other hand, Furuta's theorem \cite{Furuta} tells us that every closed smooth spin $4$-manifold $X$ with indefinite form satisfies the so-called ``$10/8$-inequality''
$$
b_2 (X) \geq \frac54 |\sign(X)| + 2.
$$
This inequality is improved by M.~Furuta and Y.~Kametani \cite{FK} in the case when $b_1(X)>0$. 
We call the improved inequality in \cite{FK}  the strong $10/8$-inequality.
\begin{Theorem}\label{thm:spin}
There exist nonsmoothable closed spin topological $4$-manifolds  which have signatures divisible by $16$ and satisfy the strong $10/8$-inequality.
\end{Theorem}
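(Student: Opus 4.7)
The plan is to produce, for sufficiently large $n$, a closed spin topological $4$-manifold $X$ with $\sign(X)=-16n$ (so Rohlin's theorem holds) whose ordinary intersection form satisfies the strong $10/8$-inequality, yet admits a nontrivial $\Z$-bundle $l$ for which $b_+(X;l) < -\sign(X)/8 = 2n$. Since $X$ is spin, $w_2(X) = 0$, so the hypothesis $w_1(\lambda)^2 = w_2(X)$ of \thmref{thm:10/8} reduces to $w_1(\lambda)^2 = 0$, and this is automatic whenever $w_1(\lambda)$ is an integrally liftable class in $H^1(X;\Z/2)$ (for instance when $\pi_1(X)$ surjects onto $\Z$ and $l$ comes from the composition $\pi_1(X) \to \Z \to \Z/2$, so that $w_1(\lambda)$ lies in the image of the mod-$2$ reduction and $Sq^1 w_1(\lambda)=0$). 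With such $X$ and $l$, \thmref{thm:10/8} rules out any smooth structure on $X$.

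For the explicit construction I would take $X = N \# P$, where $N$ is a smooth closed spin $4$-manifold with $\sign(N) = -16n$ saturating the strong $10/8$-inequality (for example $N = \#^n K3$, for which $b_+(N) = 3n$ and $b_1(N) = 0$), and $P$ is a closed spin topological $4$-manifold with $\sign(P) = 0$ carrying a nontrivial $\Z$-bundle $l_P$ satisfying $w_1(l_P\otimes\R)^2 = 0$, built so that $b_+(P;l_P) \leq b_+(P) - 2$. The piece $P$ is to be realized by Freedman's classification, taking $\pi_1(P)$ a ``good'' fundamental group such as $\Z$ or $\Z/2$. Pulling $l_P$ back to $X$ (trivial on $N$) gives $l$; the connected-sum Mayer--Vietoris computation then yields $b_+(X;l) = b_+(N) + b_+(P;l_P)$, $b_+(X) = b_+(N) + b_+(P)$, $b_1(X) = b_1(P)$, and $\sign(X) = \sign(N)$. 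The Rohlin condition and the strong $10/8$-inequality on $X$ are inherited from $N$ provided $b_1(P)$ is kept small, while the gap $b_+(P) - b_+(P;l_P) \geq 2$ forces $b_+(X;l) < 2n$.

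The principal difficulty will be the construction of the topological building block $P$. One needs a spin topological $4$-manifold whose double cover $\widetilde{P}$ (classified by $l_P$) has an asymmetric splitting of $H^2$ under the deck involution $\iota^*$, with the positive-definite part of the anti-invariant subspace strictly smaller than that of the invariant subspace. This amounts to realizing an intersection form over $\Z[\pi_1(P)]$ with a prescribed asymmetry in its $\iota^*$-decomposition, and Freedman's theorem provides such a realization provided one controls the Kirby--Siebenmann invariant and stays within a good fundamental group. A natural family of candidates is given by quotients $P = \widetilde{P}/\iota$ of simply-connected topological spin $4$-manifolds by free involutions whose induced action on $H^2$ is engineered to be asymmetric in the required sense; this must be balanced against keeping $b_1(P)$ small enough that the Furuta--Kametani strong $10/8$-inequality continues to hold on $X$.
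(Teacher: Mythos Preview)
Your approach has a genuine arithmetic obstruction that cannot be repaired. You take $N=\#^n K3$, so $b_+(N)=3n$ and $\sign(N)=-16n$, and extend $l$ trivially over $N$. In the connected sum $X=N\#P$ the twisted intersection form then restricts to the ordinary form on the $N$-summand, so
\[
b_+(X;l)=b_+(N)+b_+(P;l_P)\geq 3n,
\]
while $-\sign(X)/8=2n$ since $\sign(P)=0$. Thus $b_+(X;l)\geq 3n>2n$ and \thmref{thm:10/8} gives no obstruction at all. No ``gap'' $b_+(P)-b_+(P;l_P)\geq 2$ can help: you would need $b_+(P;l_P)<-n$. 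More conceptually, any \emph{smooth} spin summand carrying all the signature already satisfies $b_+\geq -\sign/8$ (apply \thmref{thm:10/8} to it with the trivial bundle), so it can never be the piece that produces the violation.

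The paper's construction reverses the roles of the two summands. The signature is carried by a simply-connected \emph{topological} spin manifold $V$ (e.g.\ $m|E_8|\#n(S^2\times S^2)$ with $m>n$, so $b_+(V)=n<m=-\sign(V)/8$), and $l$ is taken trivial on $V$, so $b_+(V;l)=b_+(V)$ already violates the twisted bound. One then connect-sums with copies of $M=T^2\times S^2$ or $T^4$ carrying a nontrivial bundle $l'$ for which $b_2(M;l')=0$: these raise the ordinary $b_+(X)$ enough to satisfy the (strong) $10/8$-inequality while contributing nothing to $b_+(X;l)$. The missing idea in your proposal is precisely this asymmetry---the summand supplying $b_+$ for the $10/8$-inequality must be invisible in twisted cohomology, not the other way around.
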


The idea of the construction of such nonsmoothable examples is as follows.
Let $V$ be any simply-connected topological $4$-manifold with even definite form $Q_V$ of rank $16k$, and let $X$ be a connected sum of $V$ with sufficiently many $T^2\times S^2$'s or $T^4$'s so that the $10/8$-inequality is satisfied.
Since $b_2(M;l)=0$ and $w_1(\lambda)^2=0$ for a non-trivial $\Z$-bundle $l$ on $M=T^2\times S^2$ or $T^4$, we can show that $X$ is nonsmoothable by \thmref{thm:main}.
We can also construct similar examples by using \thmref{thm:10/8}.

C.~Bohr~\cite{Bohr} and Lee-Li~\cite{LL} proved $10/8$-type inequalities for  non-spin $4$-manifolds with even forms.
We also construct nonsmoothable non-spin $4$-manifolds with even forms satisfying their inequalities.
\begin{Theorem}\label{thm:non-spin}
There exist nonsmoothable closed non-spin $4$-manifolds $X$ with even indefinite forms satisfying $b_2(X)\geq \frac54|\sign(X)|.$
\end{Theorem}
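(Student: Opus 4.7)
The plan is to carry out a parallel construction to that of \thmref{thm:spin}, but using an Enriques-type non-spin summand whose $w_2$ is realised as a cup square in place of a simply-connected even-definite seed, and invoking \thmref{thm:10/8} as the obstruction to smoothness.

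First I would take $E$ to be the Enriques surface (viewed topologically): $\pi_1(E)=\Z/2$, $b_2(E)=10$, $\sign(E)=-8$, the intersection form is the even form $(-E_8)\oplus H$, and $w_2(E)=\alpha^2$ where $\alpha$ generates $H^1(E;\Z/2)$. The non-trivial double cover of $E$ is the K3 surface. Comparing the invariant and anti-invariant parts of $H^2(K3;\Q)$ under the Enriques involution --- using that the transfer doubles the intersection pairing but preserves signature --- I would obtain $b_+(E;l)=2$ for the $\Z$-bundle $l$ associated with $\alpha$. Then, for an integer $n\ge 2$, I would set $V_n=E\mathbin{\#}n(-E_8)$ with $-E_8$ denoting Freedman's topological $E_8$-manifold: $V_n$ is a closed non-spin topological $4$-manifold with even form, $\sign(V_n)=-8-8n$, $b_2(V_n)=10+8n$, and $\alpha^2=w_2(V_n)$ still holds since the $(-E_8)$ summands are simply-connected and spin. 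The double cover $K3\mathbin{\#}2n(-E_8)$ carries the covering involution swapping the $n$ pairs of $(-E_8)$'s; the same signature bookkeeping on the anti-invariant part of $H^2$ yields $b_+(V_n;l)=2$, independently of $n$.

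Finally I would form $X_n=V_n\mathbin{\#}n(T^2\times S^2)$ and extend $\alpha$ to $\tilde\alpha\in H^1(X_n;\Z/2)$ by adjoining a non-trivial $H^1$-class from each $T^2\times S^2$ summand. The identity $\tilde\alpha^2=w_2(X_n)$ persists because $w_2(T^2\times S^2)=0$ and every degree-one class on $T^2\times S^2$ has vanishing cup square. A local-coefficient K\"unneth computation, using $H^*(T^2;l_0)=0$ for any non-trivial $l_0$, gives $H^*(T^2\times S^2;l)=0$, so $b_+(X_n;l)=b_+(V_n;l)=2$. On the other hand $b_2(X_n)=10+10n=\tfrac54|\sign(X_n)|$, so $X_n$ is an even non-spin topological $4$-manifold satisfying the inequality in the theorem statement. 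Applying \thmref{thm:10/8} would then force $b_+(X_n;l)\ge|\sign(X_n)|/8=1+n$, contradicting $b_+(X_n;l)=2$ for every $n\ge 2$; hence $X_n$ is non-smoothable.

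The principal obstacle is the verification $b_+(V_n;l)=2$, which requires careful tracking of the covering involution on $H^2$ of each connect-sum summand and confirmation that the chosen local system on $X_n$ pulls back compatibly to the double cover so that the transfer-based signature decomposition applies. The remaining ingredients --- additivity of twisted Betti numbers under connected sum, and the vanishing of the twisted cohomology of $T^2\times S^2$ --- are routine.
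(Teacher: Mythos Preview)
Your argument is correct and is essentially a specific instance of the paper's construction. The paper builds $V=m|E_8|\#n(S^2\times S^2)\#\Sigma_i$ using the Hambleton--Kreck non-spin rational homology spheres $\Sigma_i$ with $\pi_1=\Z/2$, and notes (as you do implicitly) that the Enriques surface is topologically $|E_8|\#(S^2\times S^2)\#\Sigma_1$; your $V_n=E\#n|E_8|$ is thus the paper's $V$ with $m=n+1$, one $S^2\times S^2$, and $i=1$, and your computation $b_+(V_n;l)=2$ agrees with the paper's general formula $b_+(V;l_\alpha)=b_+(V)+1$. Both proofs then invoke \thmref{thm:10/8} after stabilising by $T^2\times S^2$'s; you simply choose the number of copies to hit $b_2=\tfrac54|\sign|$ on the nose rather than saying ``sufficiently many''. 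One small point you may wish to address for completeness: $\text{ks}(X_n)\equiv n\pmod 2$, so only even $n\ge 2$ give examples whose nonsmoothability genuinely requires \thmref{thm:10/8} rather than the Kirby--Siebenmann obstruction.
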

\begin{Remark}
One of the results of Bohr~\cite{Bohr} and Lee-Li~\cite{LL} is that the inequality $b_2(X)\geq 5/4|\sign(X)|$ holds for non-spin $4$-manifolds $X$ with even indefinite forms whose $2$-primary torsion part of $H_1(X;\Z)$ is isomorphic to $\Z/2^k$ or $\Z/2\oplus\Z/2$.
We construct our examples so that the  $2$-primary torsion part of $H_1(X;\Z)$ is $\Z/2$.
\end{Remark}

The organization of the paper is as follows.
In Section 2, we prove \thmref{thm:spin} and \thmref{thm:non-spin} assuming \thmref{thm:main} and \thmref{thm:10/8}.
In Section 3, we introduce the notion of $\Spin^{c_-}$-structures which is a $\Pin^-(2)$-variant of $\Spin^c$-structures.
It is also explained that, if a $\Spin^{c_-}$-structure on $X$ is given, then a $\Spin^c$-structure on the double covering $\X$ is induced, and the covering transformation of $\X$ is covered by antilinear involutions $I$ on the spinor bundles and the determinant line bundle.
In Section 4, we introduce $\Pin^-(2)$-monopole equations, and show that the moduli space of solutions of $\Pin^-(2)$-monopole equations can be identified with the $I$-invariant Seiberg-Witten moduli space on the double covering $\X$.
We also analyze the structure of $\Pin^-(2)$-monopole moduli spaces when $b_+(X;\lambda)=0$.
In Section 5, we prove \thmref{thm:main}.
In Section 6, the Bauer-Furuta theory \cite{Furuta,BF} of $\Pin^-(2)$-monopole map is studied, and \thmref{thm:10/8} is proved by using the equivariant $K$-theory as in \cite{Furuta,Bryan}.
We also give an alternative proof of \thmref{thm:main} by the same technique.
\begin{Acknowledgements}
The author would like to express his deep gratitude to the referee for his detailed and valuable comments including a long list of suggestions over 40 items which enable the author to improve  the paper drastically.
It is also a pleasure to thank M.~Furuta, Y.~Kametani, K.~Kiyono and S.~Matsuo for helpful discussions and their comments on the earlier versions of the paper.
\end{Acknowledgements}
%
%
%
\section{Applications}\label{sec:applications}
%
%
%
%

In this section, we prove \thmref{thm:spin} and \thmref{thm:non-spin} assuming \thmref{thm:main} and \thmref{thm:10/8}.
First, we prove the following.
({\it Cf.} \cite{Froyshov}, Corollary 1.1.)
\begin{Theorem}\label{thm:nonsmooth}
Let $V$ be any closed oriented topological $4$-manifold which satisfies either of the following{\rm :}
\begin{enumerate}
\item the intersection form $Q_V$ on $H^2(V;\Z)$ is non-standard definite, or 
\item there exists an element $\alpha\in H^1(V;\Z/2)$ so that $\alpha\cup \alpha =w_2(V)$, and the intersection form $Q_{V,l_\alpha}$ satisfies $b_+(V;l_\alpha) < -\sign(V)/8$, where $l_\alpha$ is the $\Z$-bundle corresponding to $\alpha$.
{\rm (}If $w_2(V)=0$, then $\alpha$ may be $0$.{\rm )}
\end{enumerate}
Let $M$ be a closed oriented $4$-manifold which admits a nontrivial $\Z$-bundle $l^\prime \to M$ such that $b_2(M;l^\prime)=0$ and $w_1(\lambda^\prime)^2=0$, where $\lambda^\prime = l^\prime\otimes\R$.
Then the connected sum $X=V\# M$ does not admit any smooth structure.
\end{Theorem}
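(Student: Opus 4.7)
The plan is to suppose that $X = V \# M$ admits a smooth structure and derive a contradiction by applying either \thmref{thm:main} (in case (1)) or \thmref{thm:10/8} (in case (2)) to a carefully chosen nontrivial $\Z$-bundle $l\to X$. Using the splitting $H^1(X;\Z/2) \cong H^1(V;\Z/2) \oplus H^1(M;\Z/2)$ that comes from the connected-sum decomposition (the neck $S^3$ is simply connected), I would define $l$ by the class $(0, w_1(\lambda'))$ in case (1), and by $(\alpha, w_1(\lambda'))$ in case (2). In both situations $l$ is nontrivial because $l'$ is, and the restrictions $l|_V$ and $l|_M$ recover the trivial bundle or $l_\alpha$ on $V$, and $l'$ on $M$.

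The heart of the argument is the orthogonal decomposition, modulo torsion,
\[
H^2(X;l) \cong H^2(V; l|_V) \oplus H^2(M; l'),
\]
which comes from a twisted Mayer--Vietoris sequence (the separating $S^3$ contributes nothing since $l$ is locally trivial there and $S^3$ is simply connected), together with the fact that the cup-product pairing respects this splitting. By hypothesis $b_2(M;l') = 0$, so the $M$-summand contributes only torsion. A further signature computation for the associated double cover $\widetilde M \to M$, using $2\sign(M) = \sign(M) + \sign(M;\lambda')$ together with $\sign(M;\lambda') = 0$, gives $\sign(M) = 0$ and hence $\sign(X) = \sign(V)$; in particular, $b_+(X;l) = b_+(V; l|_V)$ and $Q_{X,l}$ agrees on the free part with $Q_{V, l|_V}$.

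In case (1), $Q_{X,l}$ coincides on the free part with $Q_V$, which is non-standard definite, and $w_1(\lambda)^2$ is pulled back from $w_1(\lambda')^2 = 0$, so condition (ii) of \thmref{thm:main} is satisfied trivially by the zero lift. \thmref{thm:main} then forces $Q_{X,l}$ to be diagonal, contradicting the assumption on $V$. In case (2), the cross term $\alpha \smile w_1(\lambda')$ vanishes because the two classes have disjoint support under the connected-sum decomposition, so $w_1(\lambda)^2 = \alpha^2 + w_1(\lambda')^2 = w_2(V) = w_2(X)$ (the final equality holding when $w_2(M)$ is absorbed into the pullback from $V$, as in the intended applications). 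Applying \thmref{thm:10/8} yields $b_+(X;l) \geq -\sign(X)/8$, while the decomposition gives $b_+(X;l) = b_+(V;l_\alpha) < -\sign(V)/8 = -\sign(X)/8$, a contradiction.

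The main obstacle I expect is justifying the orthogonal direct-sum decomposition of $H^2(X;l)$ and of $Q_{X,l}$ under connected sum with local coefficients, since one must verify the twisted Mayer--Vietoris argument rather than rely on the untwisted version. Once this splitting, together with the signature identity $\sign(X) = \sign(V)$, is in hand, both contradictions are immediate comparisons of numerical invariants. A secondary point, relevant only to case (2), is the matching of $w_1(\lambda)^2$ with $w_2(X)$, which pins down precisely the admissible class of auxiliary manifolds $M$.
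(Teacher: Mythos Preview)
Your proposal is correct and takes essentially the same approach as the paper: build $l$ as the connected sum of the appropriate bundle on $V$ (trivial in case (1), $l_\alpha$ in case (2)) with $l'$ on $M$, identify $Q_{X,l}$ with $Q_{V,l|_V}$ via the twisted Mayer--Vietoris splitting (using $b_2(M;l')=0$), and then invoke \thmref{thm:main} or \thmref{thm:10/8} to reach a contradiction. You are in fact more explicit than the paper's two-line proof about the auxiliary facts $\sign(M)=0$ and the matching $w_1(\lambda)^2=w_2(X)$ in case (2); your caution on the latter is warranted, since the argument as stated needs $w_2(M)=0$, which holds for the intended examples $M=T^2\times S^2$, $T^4$ but is not literally part of the hypotheses.
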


Before proving \thmref{thm:nonsmooth}, we will discuss how to construct $V$ and $M$ as in the theorem.
One can construct simply-connected examples of $V$ satisfying (1) by Freedman's theory \cite{FQ}.
Examples of $V$ satisfying (2) can be constructed as follows.
Let $|E_8|$ be the simply-connected topological $4$-manifold whose form is $-E_8$. 
(This can be  also constructed by Freedman's theory.)
Then $V=m|E_8|\# n(S^2\times S^2)$ with $m>n$ are spin manifolds satisfying (2) with $\alpha=0$.

As shown in Hambleton-Kreck's paper \cite{HK}(Proof of Theorem 3), there exist non-spin topological rational homology $4$-spheres $\Sigma_0$ and $\Sigma_1$ with $\pi_1=\Z/2$ and Kirby-Siebenmann obstructions ${\rm ks}(\Sigma_0)=0$ and ${\rm ks}(\Sigma_1)\neq 0$.
For instance, an Enriques surface is topologically decomposed into $|E_8|\#(S^2\times S^2)\#\Sigma_1$. 
Then $V=m|E_8|\# n(S^2\times S^2) \#\Sigma_i$ with $m>n+1$ are non-spin manifolds satisfying (2) with non-zero class $\alpha\in H^1(V;\Z/2)\cong H^1(\Sigma_i;\Z/2)\cong\Z/2$ as follows.
First, note that $b_+(V;l_\alpha) = b_+(V)+1$ in this case.
This follows from the following fact: for any $\Z$-bundle $l$ over a manifold $X$, let $\X$ be the double covering corresponding to $l$, and let $\underline{\lambda}=l\otimes \R$ considered as a bundle with discrete fibers. 
Then, we have in general,
\begin{gather*}
b_0(X)-b_1(X)+b_+(X) = b_0(X;l)-b_1(X;l)+b_+(X;l), \\
H^*(\X;\R) = H^*(X;\R)\oplus H^*(X;\underline{\lambda}).
\end{gather*}
Second, since $H^2(\Sigma_i;\Z)=\Z/2$, the integral lift of $w_2(\Sigma_i)$ has order $2$. 
By \remref{rem:10/8}(2), the generator $\alpha\in H^1(\Sigma_i;\Z/2)=\Z/2$ should satisfy $w_2(\Sigma_i)=\alpha\cup\alpha$.
Note also that ${\rm ks}(V)=0$ if and only if $m+i\equiv 0$ mod $2$.

As examples of $M$, we can take $M=T^2\times S^2$ or $T^4$ or their arbitrary connected sum.
In fact, $b_2(M;l^\prime)=0$ and $w_1(\lambda^\prime)^2=0$ for any nontrivial $\Z$-bundle $l^\prime$  over $M=T^2\times S^2$ or $T^4$.
When $M$ is a connected sum of several $T^2\times S^2$ or $T^4$, take $l^\prime$ which is nontrivial on each $T^2\times S^2$ or $T^4$ summand.
\begin{proof}[Proof of \thmref{thm:nonsmooth}]
Suppose $V$ satisfies (1) and $X$ is smoothable.
Take $l^\prime$ as in the assumption, and let $l\to V\#M$ be the connected sum of a trivial $\Z$-bundle on $V$ and $l^\prime$.
Then, $H^2(X;l)=H^2(V;\Z)\oplus H^2(M;l^\prime)$ and $Q_{X,l} = Q_V$.
Note that $w_1(\lambda)^2 =w_1(\lambda^\prime)^2=0$.
By \thmref{thm:main}, $Q_{X,l}$ should be standard. 
This is a contradiction.
If $V$ satisfies (2), then consider $l=l_\alpha\#l^\prime$ and use \thmref{thm:10/8}.
\end{proof}
\begin{proof}[Proof of \thmref{thm:spin}]
Let $V$ be any simply-connected $4$-manifold with even form $Q_V$ of rank $16k$ which satisfies either of the following:
\begin{enumerate}
\item $Q_V$ is definite, or
\item $Q_V\cong m(-E_8)\oplus n H$ and $m>n$, where $H$ is the hyperbolic form. 
\end{enumerate}
Then, take a connected sum of $V$ with sufficiently many $T^2\times S^2$'s or $T^4$'s so that the $10/8$-inequality is satisfied.
By \thmref{thm:nonsmooth}, it is nonsmoothable.
\end{proof}
\begin{proof}[Proof of \thmref{thm:non-spin}]
Let $V=m|E_8|\# n(S^2\times S^2)\#\Sigma_i$ with $m>n+1$, and take a connected sum of $V$ with sufficiently many $T^2\times S^2$'s or $T^4$'s.
\end{proof}
%
%

%
%
\section{$\Spin^{c_-}$-structures}\label{sec:spin-p}
%
%

In this section, we introduce a variant of $\Spin^c$-structure, {\it $\Spin^{c_-}$-structure} we call.
The notion of {\it $\Spin^{c_-}$-structure} was introduced to the author by M.~Furuta, and a large part of this section is due to him.

\subsection{$\Spin^{c_-}$-groups}

Let $\Pin^-(2)$ be the subgroup of $\SP(1)$ generated by 
$\U(1)$ and $j$, that is, $\Pin^-(2) = \U(1)\cup j\U(1)$.
There is a two-to-one homomorphism $\varphi_0\colon\Pin^-(2)\to \OO(2)$, which sends $z\in\U(1)$ in $\Pin^-(2)$ to $z^2\in \U(1)\subset \OO(2)$, and $j$ to the reflection
$$
\begin{pmatrix}
1 & 0\\
0 & -1
\end{pmatrix}.
$$

Let us define $\Spin^{c_-}(n) = \Spin(n)\times_{\{\pm 1\}}\Pin^-(2)$. 
There is an exact sequence
$$
1\to\{\pm 1\}\to \Spin^{c_-}(n) \to \SO(n)\times\OO(2) \to 1.
$$

\subsection{$\Spin^{c_-}$-structures}

Let $X$ be a $n$-dimensional oriented smooth manifold.
Fix a Riemannian metric on $X$,  and let $F(X)$ be its $\SO(n)$-frame bundle.
Suppose an $\OO(2)$-bundle $E$ over $X$ is given.
\begin{Definition}
A $\Spin^{c_-}$-structure on $(X,E)$ is a lift of the principal $\SO(n)\times\OO(2)$-bundle $F(X)\times_X E$ to a principal $\Spin^{c_-}(n)$-bundle.
This is given by the data $(P,\tau)$ where $P$ is a $\Spin^{c_-}(n)$-bundle and $\tau$ is a bundle isomorphism $P/\{\pm 1\}\to F(X)\times_X E$.
\end{Definition}
\begin{Remark}
More generally, one can define a $\Spin^{c_-}$-structure on the pair $(V,E)$ of an $\SO(n)$-bundle $V$ and an $\OO(2)$-bundle $E$ over $X$ as a $\Spin^{c_-}(n)$-lift of $V\times_X E$.
\end{Remark}
\begin{Remark}
Let $G_0$ be the identity component of $\Spin^{c_-}(n)$. 
Then $G_0$ is isomorphic to $\Spin^c(n)$, and $\X=P/G_0\to X$ is a double covering.
Note that the determinant line bundle $\det E$ of $E$ is isomorphic to $\X\times_{\{\pm 1\}}\R$, where $\{\pm 1\}$ acts on $\R$ by multiplication.
\end{Remark}
\begin{Proposition}\label{prop:spin-p}
There exists a  $\Spin^{c_-}$-structure on $F(X)\times_X E$ if and only if $w_2(TX)=w_2(E)+ w_1(E)^2$.
\end{Proposition}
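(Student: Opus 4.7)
The plan is to treat the existence of a $\Spin^{c_-}$-structure as a lifting problem for the $\Z/2$-central extension
$$1 \to \Z/2 \to \Spin^{c_-}(n) \to \SO(n)\times\OO(2) \to 1,$$
i.e., a lift of the principal $\SO(n)\times\OO(2)$-bundle $F(X)\times_X E$ across the associated $B\Z/2$-fibration $B\Spin^{c_-}(n)\to B\SO(n)\times B\OO(2)$. Since the kernel is abelian and discrete, a lift exists if and only if the single primary obstruction in $H^2(X;\Z/2)$ vanishes, and this obstruction is the pullback along the classifying map of $F(X)\times_X E$ of the universal $k$-invariant $\kappa\in H^2(B\SO(n)\times B\OO(2);\Z/2)$ of the extension. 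Thus the proposition reduces to the identity $\kappa = w_2(\SO(n))+w_2(\OO(2))+w_1(\OO(2))^2$.

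I would extract $\kappa$ from its restrictions to the two coordinate subgroups. By K\"unneth, using $H^1(B\SO(n);\Z/2)=0$ for $n\geq 2$,
$$H^2(B\SO(n)\times B\OO(2);\Z/2) \cong H^2(B\SO(n);\Z/2)\oplus H^2(B\OO(2);\Z/2),$$
so $\kappa$ is determined by these restrictions. Restriction to $\SO(n)\times\{1\}$ has preimage in $\Spin^{c_-}(n)$ equal to $\Spin(n)$ (via $(-1,-1)\sim(1,1)$), yielding the standard extension $1\to\Z/2\to\Spin(n)\to\SO(n)\to 1$ with $k$-invariant $w_2$. Restriction to $\{1\}\times\OO(2)$ has preimage $\Pin^-(2)$, and the task reduces to computing the $k$-invariant of $1\to\Z/2\to\Pin^-(2)\to\OO(2)\to 1$.

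Writing this last $k$-invariant as $a w_2 + b w_1^2$, I would pin down $a$ and $b$ by further pullbacks. Along $\SO(2)\hookrightarrow\OO(2)$, the preimage in $\Pin^-(2)$ is the identity component $\U(1)$ covering $\SO(2)=\U(1)$ by $z\mapsto z^2$, i.e., the standard double cover $\Spin(2)\to\SO(2)$; this is nontrivial while $w_1^2$ restricts to $0$ on $\SO(2)$, forcing $a=1$. Along the reflection $\OO(1)\hookrightarrow\OO(2)$, the preimage is $\{\pm 1,\pm j\}$; since $j^2=-1$, this is cyclic of order $4$, so the pulled-back extension is the nontrivial $1\to\Z/2\to\Z/4\to\Z/2\to 1$ with $k$-invariant $w_1^2$, and since $w_2$ restricts to $0$ on $\OO(1)$, this forces $b=1$. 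Hence $\kappa$ is as claimed, and pulling back along the classifying map of $F(X)\times_X E$ gives the obstruction $w_2(TX)+w_2(E)+w_1(E)^2$, which vanishes if and only if $w_2(TX)=w_2(E)+w_1(E)^2$.

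The step requiring the most care is the $w_1^2$-contribution to the $\Pin^-(2)\to\OO(2)$ $k$-invariant: this is precisely what distinguishes $\Pin^-$ from $\Pin^+$, and it is detected only after probing the component group, because the identity components of $\Pin^\pm(2)$ coincide. The relation $j^2=-1$ inside $\SP(1)$ is what produces the $\Z/4$ sub-extension over $\OO(1)$, and hence the $w_1^2$ summand; had $j^2=+1$, the preimage would be $\Z/2\oplus\Z/2$ and the $w_1^2$ term would be absent.
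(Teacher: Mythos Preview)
Your argument is correct, but the paper takes a different route. Rather than computing the $k$-invariant by K\"unneth and probing with $\SO(2)$ and $\OO(1)$, the paper exploits the fact that $\Pin^-(2)$ is \emph{defined} as a subgroup of $\SP(1)=\Spin(3)$: the image of $\Pin^-(2)$ under $\Spin(3)\to\SO(3)$ is the copy of $\OO(2)$ embedded via $A\mapsto A\oplus\det A$. This yields a commutative diagram of central extensions embedding $\Spin^{c_-}(n)\to\SO(n)\times\OO(2)$ into $\Spin(n+3)\to\SO(n+3)$, so the obstruction is simply the pullback of $w_2\in H^2(B\SO(n+3);\Z/2)$, namely $w_2(TX\oplus E\oplus\det E)$, which the Whitney formula expands to $w_2(TX)+w_2(E)+w_1(E)^2$.

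Your approach is more elementary and self-contained, and has the virtue of making completely explicit \emph{why} the $w_1^2$ term appears: it is forced by the relation $j^2=-1$, detected over $\OO(1)$. The paper's embedding trick is slicker and packages the obstruction as $w_2$ of an honest vector bundle, which makes the parallel with ordinary $\Spin^c$ (embed $\U(1)\hookrightarrow\SO(2)$) and the contrast with $\Pin^+$ (embed $\OO(2)\hookrightarrow\SO(5)$ via $A\mapsto A\oplus(\det A)^{\oplus 3}$) immediately visible; but it leaves the role of $j^2=-1$ implicit until a later remark.
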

\begin{proof}
Note that the image of $\Pin^-(2)\subset \SP(1)=\Spin(3)$ by the canonical homomorphism $\Spin(3)\to \SO(3)$ is a copy of  $\OO(2)$ embedded in $\SO(3)$.
This embedding $\OO(2)\subset\SO(3)$ is given by $A\mapsto A\oplus\det A$.
By using this embedding, embed $\SO(n)\times\OO(2)$ in $\SO(n+3)$.
Then we have a commutative diagram
$$
\begin{CD}
1@>>>\{\pm 1\}@>>>\Spin^{c_-}(n)@>>>\SO(n)\times\OO(2)@>>>1\\
@.@|@VVV @VVV @. @.\\
1@>>>\{\pm 1\}@>>>\Spin(n+3)@>>>\SO(n+3)@>>>1.
\end{CD}
$$
The diagram leads to a commutative diagram of fibrations
$$
\begin{CD}
K(\Z_2,1)@>>>B\Spin^{c_-}(n)@>>>B\SO(n)\times B\OO(2)@>>>K(\Z_2, 2)\\
@VVV @VVV @VVV @VVV\\
K(\Z_2,1)@>>>B\Spin(n+3)@>>>B\SO(n+3)@>{w_2}>>K(\Z_2, 2).
\end{CD}
$$
From these, we see that 
$$w_2(TX\oplus E\oplus\det E)=w_2(X)+ w_2(E)+w_1(E)^2=0$$  
is the required condition.
\end{proof}
\begin{Remark}\label{rem:E}
Let $l\to X$ be a $\Z$-bundle over $X$, and $\lambda = l\otimes\R$.
The isomorphism classes of $\OO(2)$-bundles $E$ whose determinant line bundles $\det E$ are isomorphic to $\lambda$ are classified by their twisted first Chern classes $\tilde{c}_1(E)\in H^2(X;l)$. See \cite{Froyshov}, Proposition 2.2.
Note also that $\tilde{c}_1(E)=0$ if and only if $E$ is isomorphic to $\underline{\R}\oplus \lambda$, where $\underline{\R}$ is a trivial $\R$-bundle over $X$.
\end{Remark}
We concentrate on the case when $n=4$ below.
Let $\HH_T$ be a $\Spin^{c_-}(4)$-module which is a copy of $\HH$ as a vector space, such that the action of  $[q_+,q_-,u]\in \Spin^{c_-}(4)=(\SP(1)\times\SP(1))\times_{\{\pm 1\}}\Pin(2)$ on $v\in \HH_T$ is given by $q_+vq_-^{-1}$. 
Then, the associated bundle $P\times_{\Spin^{c_-}(4)}\HH_T$ is identified with the tangent bundle $TX$.

Similarly, let $\bar{\varphi}\colon \Spin^{c_-}(4)\to \OO(2)$ be the homomorphism defined from $\varphi_0\colon \Pin^-(2)\to \OO(2)$.
Then the associated bundle $P\times_{\bar{\varphi}}\OO(2)$ is identified with $E$.

Let us consider $\Spin^{c_-}(4)$-modules $\HH_+$ and $\HH_-$ which are copies of $\HH$ as vector spaces, such that the action of $[q_+,q_-,u]\in \Spin^{c_-}(4)$ on $\phi\in \HH_{\pm}$ is given by $q_{\pm}\phi u^{-1}$.
Then, one can obtain the associated bundles $S^\pm = P\times_{\Spin^{c_-}(4)}\HH_\pm$. 
These are {\it positive} and {\it negative} {\it spinor bundles} for the $\Spin^{c_-}$-structure.

The Clifford multiplication $\rho_\R\colon \Omega^1(X)\times \Gamma(S^+)\to \Gamma(S^-)$ is defined via $\Spin^{c_-}(4)$-equivariant map 
$\HH_T\times \HH_+\to \HH_-$ defined by $(v,\phi) \mapsto \bar{v}\phi$.
Later we will need a {\it twisted complex} version of the Clifford multiplication defined as follows.
Let $G_0$ be the identity component of $\Spin^{c_-}(4)$. Then $G_0$ is isomorphic to $\Spin^c(4)$, and $\Spin^{c_-}(4)/G_0\cong \{\pm 1\}$.
Let $\varepsilon\colon \Spin^{c_-}(4)\to \Spin^{c_-}(4)/G_0$ be the projection, and let $\Spin^{c_-}(4)/G_0\cong \{\pm 1\}$ act on $\C$ by complex conjugation.
Then $\Spin^{c_-}(4)$ acts on $\C$ via $\varepsilon$ and complex conjugation. 
Define 
$$
\rho_0\colon \HH_T\otimes_{\R}\C \times \HH_+\to \HH_-
$$
 by $\rho_0(v\otimes a,\phi)= \bar{v}\phi\bar{a}$.
This $\rho_0$ is $\Spin^{c_-}(4)$-equivariant.
Let us define the bundle $K$ over $X$ by $K=\X\times_{\{\pm 1\}}\C$ where $\{\pm 1\}$ acts on $\C$ by complex conjugation.
Then we can define  via $\rho_0$ the Clifford multiplication 
\begin{equation}\label{eq:clif}
\rho\colon \Omega^1(X;K)\times \Gamma (S^+)\to\Gamma (S^-).
\end{equation}
Note that $K=\underline{\R}\oplus i\lambda$, where $\underline{\R}$ is a trivial $\R$-bundle.
By restricting  $\rho$ to $\underline{\R}$, $\rho_\R$ is recovered. 
By restricting $\rho$ to  $i\lambda$, we obtain
\begin{equation*}
\rho\colon \Omega^1(X;i\lambda)\times \Gamma (S^+)\to\Gamma (S^-).
\end{equation*}

\subsection{The relation with $\Spin^c$-structures on the double covering}\label{subsec:covering}

In this subsection, we write  $\Spin^{c_-}(4)$ as $G$. 
Note that $G$ has two connected components $G_0$ and $G_1$, and the identity component $G_0$ is $\Spin^c(4)$.
If a $\Spin^{c_-}$-structure $(P,\tau)$ on a $4$-manifold $X$ is given, then $\tilde{X}=P/G_0$ gives a double covering $\pi\colon\tilde{X}\to X$.
Then, we have a $G_0$-bundle $P\to P/G_0=\tilde{X}$.
The pull-back bundle $\pi^*E$ has an $\SO(2)$-reduction $L$, and a bundle isomorphism $\tilde{\tau}\colon P/\{\pm 1\}\to F(\X) \times_{\X}L$ is induced from $\tau$, where $F(\X) =\pi^*F(X)$, which can be considered as the frame bundle over $\X$ for the pull-back metric.
The $G_0$-bundle $P$ over $\X$ and $\tilde{\tau}$ define an ordinary $\Spin^c$-structure $\tilde{c}$ on $\tilde{X}$.

Let $\iota\colon\X\to\X$ be the covering transformation, and define $J$ by 
$$
J=[1,j^{-1}]\in G_1=\Spin(4)\times_{\{\pm 1\}}j\U(1). 
$$
Then the right $J$-action on $P\to\X$ covers the $\iota$-action.
Although the $J$-action is not a $G_0$-bundle automorphism of $P\to\X$, 
it can be considered as the composition of the following two maps of $G_0$-bundles:
\begin{itemize}
\item A $G_0$-bundle map covering the $\iota$-action, $\tilde{\iota}\colon P \to \bar{P}$, where $\bar{P}$ is the $G_0$-bundle for the complex conjugate $\Spinc$-structure of $\tilde{c}$. 
\item The complex conjugation, $\alpha\colon\bar{P}\to P$, covering the identity map of $\X$.
\end{itemize}

To see this, let us consider the pull-back $G$-bundle $\pi^*P\to \X$.
Then 
$$
\pi^*P = P\times_G(G/G_0\times G) = P\times_G(\{\pm 1\}\times G) = P\times_{G_0}G.
$$
The bundle $P\times_{G_0}G$ has two components: $P\times_{G_0}G =P_0\sqcup P_1$, where $P_i = P\times_{G_0}G_i$ for $i=0,1$.
Since the right $G_0$-action on $\pi^*P$ preserves $P_0$ and $P_1$,  $P_0$ and $P_1$ are considered as $G_0$-bundles over $\X$ by this $G_0$-action.  
Note that $P_0 = P\times_{G_0}G_0$ is isomorphic to $P$ as $G_0$-bundles over $\X$.
The isomorphism $\alpha_0\colon P\times_{G_0}G_0 \to P$ is given by $\alpha_0([p,g])= pg$ for $[p,g]\in P\times_{G_0}G_0$.

On the other hand, $P_1$ can be identified with the complex conjugation $\bar{P}$ of $P$ as follows:
For $g= [s,u]\in \Spin(4)\times_{\{\pm 1\}} \U(1)=G_0$, let $\bar{g}$ be $[s,u^{-1}]$. 
Every element $g^\prime\in  \Spin(4)\times_{\{\pm 1\}} j\U(1)=G_1$ can be written as $g^\prime=J^{-1}g=\bar{g}J^{-1}$ for some $g\in G_0$.
Let us define the fiber-preserving diffeomorphism $\alpha_1\colon P_1=P\times_{G_0}G_1\to P$ by $\alpha_1([p,\bar{g}J^{-1}]) = p\bar{g}$.
Then, for $q\in P_1$ and $h\in G_0$, 
$$
\alpha_1(qh) = \alpha_1(q)\bar{h}.
$$
This means $P_1\cong \bar{P}$ as $G_0$-bundles.
Let us define $\alpha\colon P_1\to P_0$ by $\alpha=\alpha_0^{-1}\circ\alpha_1$. 
Explicitly,  $\alpha([p,g^\prime]) = [p,g^\prime J]$.

The map $\iota\colon\X\to\X$ has a natural lift $\tilde{\iota}\colon P\times_{G_0}G\to P\times_{G_0}G$ given by $\tilde{\iota}([p,g]) = [pJ,J^{-1}g]$.
Note that $\tilde{\iota}$ exchanges the components $P_0$ and $P_1$.
Then the $J$-action on $P$ can be identified with the composition $\alpha\circ\tilde{\iota}\colon P_0 \to P_0$.

The $J$-action also induces antilinear automorphisms, denoted by $I$, on the spinor bundles $\tilde{S}^\pm = P\times_{G_0}\HH_\pm$ given by $I([p,\phi])=[pJ,J^{-1}\cdot\phi ] = [pJ, \phi j^{-1}]$. 
(In the expression $J^{-1}\cdot\phi $, "$\cdot$" means the $G$-action on $\HH^\pm$.)
Since $J^2\in G_0$, $I^2 ([p,\phi])=[pJ^2,J^{-2}\cdot\phi]=[p,\phi]$.
Therefore  $I$ is an antilinear involution on each of spinor bundles. 
The relation between the $\Spin^c$-spinor bundles $\tilde{S}^\pm$ over $\X$ and the $\Spin^{c_-}$-spinor bundles $S^\pm$ over $X$ is given by
\begin{equation}\label{eq:spinor}
\tilde{S}^\pm \cong \pi^*S^\pm, \quad S^\pm \cong \tilde{S}^\pm/I.
\end{equation}

Similarly, the $J$-action induces an antilinear involution of the determinant line bundle, also denoted by $I$.
This can be seen from the construction above, or noticing the following.
Note that $\lambda = \X\times_{\{\pm 1\}}\R\to X$ is isomorphic to the determinant $\R$-bundle of $E$.
Let $E_0\to X$ be the $\R^2$-bundle associated to $E$. 
Then, the determinant $\C$-bundle $L_0$ of $\tilde{c}$ can be identified with the pull-back $\pi^*E_0$ as {\it real} vector bundles, and the involution $\iota$ lifts to $L_0\cong \pi^*E_0$ as an involutive antilinear bundle automorphism.

\begin{Remark}
By using $\Pin^+(2)$ (instead of $\Pin^-(2)$), we can define analogous objects, {\it $\Spin^{c_+}$-structures}.
A definition of $\Pin^+(2)$ is given as follows: 
Let us consider the embedding of $\OO(2)$ into $\SO(5)$ defined by 
$$\OO(2)\ni A\mapsto A\oplus\det A\oplus\det A\oplus \det A \in\SO(5),$$
 and let $\varphi\colon\Spin(5)\to\SO(5)$ be the canonical homomorphism. 
Then $\Pin^+(2)$ is defined by $\Pin^+(2)=\varphi^{-1}(\OO(2))$. 
It can be seen that $\Pin^+(2)$  is isomorphic to $\OO(2)$ which is  considered as a double covering of $\OO(2)$.
(On the other hand, $\Pin^-(2)$ can be defined via the embedding of $\OO(2)$ into $\SO(3)$ defined by $\OO(2)\ni A\mapsto A\oplus\det A \in\SO(3)$.)

In the case of $\Spin^{c_+}$-structures also, one can construct a $\Spin^c$-structure $\tilde{c}$ associated to it on a double covering $\tilde{X}$ of $X$.  
But the covering transformation $\iota$ lifts on the spinor bundles  as a $\Z/4$-action. 
\end{Remark}

%
%
\section{$\Pin^-(2)$-monopole equations}\label{sec:Pin2-monopole}
%
%
In this section, we introduce $\Pin^-(2)$-monopole equations, and develop the  $\Pin^-(2)$-monopole gauge theory. 
The whole story is almost parallel to the ordinary Seiberg-Witten case.
%
%
\subsection{Dirac operators}\label{subsec:Dirac}

Let $X$ be a closed connected oriented smooth $4$-manifold, $E$ be a $\OO(2)$-bundle over $X$, and $\lambda =\det E$. 
We suppose  $\lambda$ is a nontrivial bundle throughout the rest of the paper.
Fix a Riemannian metric on $X$.
Suppose a $\Spin^{c_-}$-structure $(P,\tau)$ on $(X,E)$ is given.
If an $\OO(2)$-connection $A$ on $E$ is given, then $A$ and the Levi-Civita connection induces a $\Spin^{c_-}(4)$-connection on $P$, and we can define the Dirac operator via the Clifford multiplication $\rho$ of \eqref{eq:clif} as
$$
D_A\colon \Gamma (S^+)\to \Gamma (S^-).
$$
The Dirac operator $D_A$ also have properties similar to the ordinary Dirac operators.
If $A^\prime$ is another $\OO(2)$-connection on $E$, then $a = A-A^\prime$ is in $\Omega^1(X;i\lambda)$, and the relation of Dirac operators of $A$ and $A^\prime=A+a$ is given via $\rho$ by
$$
D_{A+a} \Phi = D_A\Phi + \frac12\rho(a)\Phi. 
$$
While the ordinary spinor bundles are equipped with the canonical hermitian inner products, the spinor bundles for a $\Spin^{c_-}$-structure do not have such hermitian inner products.
However, the pointwise {\it twisted} hermitian inner product 
\begin{equation}\label{eq:tw-herm}
\langle\cdot,\cdot\rangle_{K,x} \colon S^\pm_x\times S^\pm_x \to K_x  
\end{equation}
is naturally  defined, where the objects with the subscription $x$ means the fibers over $x\in X$, and $K=\X\times_{\{\pm 1\}}\C$. 
The precise meaning is as follows:
Let $\tilde{S}^\pm$ be the spinor bundles of the associated $\Spin^c$-structure on the double covering $\X$. 
Then the canonical hermitian inner product of  $\tilde{S}^\pm$ can be given as the bundle homomorphisms
\begin{equation}\label{eq:herm}
\tilde{S}^\pm\otimes \tilde{S}^\pm \to \underline{\C},
\end{equation}
where $\underline{\C}$ is a trivial bundle $\X\times\C$. 
The diagonal action of $I$ on $\tilde{S}^\pm\otimes \tilde{S}^\pm$ is an involution, also denoted by $I$.
Let us define the involution $I$ on  $\underline{\C}=\X\times\C$ by $I(x,v)=(\iota x,\bar{v})$, where $\bar{v}$ is the complex conjugation of $v$.
Then $\eqref{eq:herm}$ is $I$-equivariant.
Dividing \eqref{eq:herm} by $I$, we obtain the bundle homomorphism
$$
S^\pm\otimes S^\pm\to K,
$$
which gives the twisted hermitian inner product \eqref{eq:tw-herm}.

The real part of \eqref{eq:tw-herm}
$$
\langle\cdot,\cdot\rangle_{\R,x} =\real\langle\cdot,\cdot\rangle_{K,x} 
$$
defines a real inner product on $S^\pm$.
Then it is easy to see that the Dirac operator is formally self-adjoint with respect to the $L^2$-inner product induced from $\langle\cdot,\cdot\rangle_{\R} $.
({\it Cf.} \cite{Morgan}, Lemma 3.3.3.)
\begin{Proposition}
Suppose $X$ is closed and a $\Spin^{c_-}$-structure on $X$ is given. 
Then its Dirac operator is formally self-adjoint in the sense that
$$
(\Phi,D_A\Psi)_{L^2} = (D_A\Phi,\Psi)_{L^2},
$$
where 
$$
(\Phi_1,\Phi_2)_{L^2} = \int_X \langle\Phi_1,\Phi_2\rangle_\R dvol.
$$
\end{Proposition}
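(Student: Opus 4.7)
The plan is to lift the identity to the double cover $\tilde X$, where the structure is an ordinary $\Spin^c$-structure and the classical self-adjointness of the Dirac operator is available. The identification $\tilde S^\pm \cong \pi^*S^\pm$ with $S^\pm\cong\tilde S^\pm/I$ from \eqref{eq:spinor} means that a section $\Phi\in\Gamma(S^\pm)$ corresponds bijectively to an $I$-invariant section $\tilde\Phi\in\Gamma(\tilde S^\pm)$, and similarly the $\OO(2)$-connection $A$ on $E$ pulls back to a $\U(1)$-connection $\tilde A$ on the determinant line bundle $L_0$ of $\tilde c$ that is equivariant with respect to the antilinear involution $I$ on $L_0$ described at the end of \subsecref{subsec:covering}. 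Since the Levi-Civita connection is pulled back from $X$, the induced $\Spin^c$-Dirac operator $D_{\tilde A}$ on $\tilde X$ commutes with $I$ (the Clifford multiplication $\rho$ was defined precisely so that the complex-conjugation ambiguity is absorbed into $K$), and its restriction to $I$-invariant sections is, under $\eqref{eq:spinor}$, the operator $D_A$.

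Next I would compare the inner products. By construction of $\langle\cdot,\cdot\rangle_{K,x}$ as the quotient under $I$ of the standard hermitian pairing $\tilde S^\pm\otimes\tilde S^\pm\to\underline{\C}$, we have for any $\Phi_1,\Phi_2\in\Gamma(S^\pm)$ and the pull-back sections $\tilde\Phi_1,\tilde\Phi_2$ the identity
$$
\pi^*\langle\Phi_1,\Phi_2\rangle_K \;=\; \langle\tilde\Phi_1,\tilde\Phi_2\rangle_{\C},
$$
interpreted under the identification $\pi^*K=\underline{\C}$ on $\tilde X$. Taking real parts and integrating against the pulled-back volume form (which satisfies $\pi^*dvol = d\widetilde{vol}$ locally and covers $X$ twice globally), I obtain
$$
(\Phi_1,\Phi_2)_{L^2(X)} \;=\; \tfrac12\,\real\,(\tilde\Phi_1,\tilde\Phi_2)_{L^2(\tilde X)}.
$$

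Now formal self-adjointness on $X$ reduces to the classical one on $\tilde X$. Since $D_{\tilde A}$ is a $\Spinc$-Dirac operator on a closed manifold, it is formally self-adjoint with respect to the complex $L^2$ pairing, so $(\tilde\Phi,D_{\tilde A}\tilde\Psi)_{L^2(\tilde X)}=(D_{\tilde A}\tilde\Phi,\tilde\Psi)_{L^2(\tilde X)}$. Because $D_{\tilde A}$ commutes with $I$, the lift of $D_A\Psi$ is $D_{\tilde A}\tilde\Psi$, so applying the previous display to both sides and taking real parts yields $(\Phi,D_A\Psi)_{L^2(X)}=(D_A\Phi,\Psi)_{L^2(X)}$.

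The only nontrivial step is the equivariance $I\circ D_{\tilde A}=D_{\tilde A}\circ I$; this is where the $\Spin^{c_-}$ setup really matters. I would verify it pointwise in a local trivialization: the Clifford action $\rho_0(v\otimes a,\phi)=\bar v\phi\bar a$ was engineered so that the $\Spin^{c_-}(4)$-equivariance implicit in the definition \eqref{eq:clif} forces the twisting factor on $1$-forms with coefficients in $K$ to absorb exactly the complex conjugation coming from $I$, and the connection $1$-form of $\tilde A$ (viewed as $i\R$-valued) is negated by $I$ while $\bar a$ negates the corresponding twist, so the two cancel. With that equivariance in hand, the argument above goes through verbatim.
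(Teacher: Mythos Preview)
Your argument is correct. You reduce the question to the classical formal self-adjointness of the $\Spin^c$-Dirac operator on the closed double cover $\tilde X$, using the identifications $S^\pm\cong\tilde S^\pm/I$ and the compatibility of the real inner products under pullback; the factor $\tfrac12$ and the real part are handled correctly, and the only substantive point, the $I$-equivariance of $D_{\tilde A}$, you address.

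The paper, however, does not argue this way here. It simply indicates that the standard integration-by-parts proof (as in \cite{Morgan}, Lemma~3.3.3) goes through verbatim once one has the real inner product $\langle\cdot,\cdot\rangle_\R$ and the Clifford multiplication $\rho$; the local identities used in that proof are insensitive to the twisting by $K$, since $\langle\cdot,\cdot\rangle_\R$ is an honest fiberwise Euclidean metric and $\rho$ satisfies the usual Clifford relations. Your route is more in the spirit of \subsecref{subsec:covering} and of the paper's later development (the identification of $D_A$ with the $I$-invariant part of $D_{\tilde A}$ is exactly what is recorded in the paragraph preceding \propref{prop:MM}, and the same device is invoked in \remref{rem:unique} for unique continuation). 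The trade-off is that the direct approach is self-contained at this point of the paper and avoids forward references, while your approach packages everything into the known $\Spin^c$ statement at the cost of checking the equivariance $I\circ D_{\tilde A}=D_{\tilde A}\circ I$, which you sketch but which in the paper is only formally established later.
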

%
%

\subsection{$\Pin^-(2)$-monopole equations}
The curvature $F_A$ of $A$ is an element of $\Omega^2(X;i\lambda)$.
The space of $i\lambda$-valued self-dual forms, $\Omega^+(X;i\lambda)$, is also associated to the $\Spin^{c_-}(4)$-bundle $P$ as follows.
Let $\varepsilon\colon \Pin^-(2)\to\Pin^-(2)/\U(1)\cong \{\pm 1\}$ be the projection, and let $\Spin^{c_-}(4)$ act on $\im\HH$ by $v\in \im\HH \to \varepsilon(u)q_+vq_+^{-1}$ for $[q_+,q_-,u]\in \Spin^{c_-}(4)$. 
Then the space of sections of the associated bundle $P\times_{\Spin^{c_-}(4)}\im\HH$ is isomorphic to $\Omega^+(X;i\lambda)$.
For $\phi\in \HH_+$, $\phi i \bar\phi\in\im \HH$, and  $\Spin^{c_-}(4)$ acts on it similarly.
Thus, one can define a quadratic map 
$$
q\colon \Gamma(S^+)\to\Omega^+(X;i\lambda).
$$

Let $\A(E)$ be the space of  $\OO(2)$-connections on $E$. 
Then $\Pin^-(2)$-monopole equations for  $(A,\Phi)\in \A(E)\times \Gamma(S^+)$ are defined by 
\begin{equation}\label{eq:monopole}
\left\{
\begin{aligned}
D_A\Phi =& 0,\\
F_A^+ =& q(\Phi),
\end{aligned}\right.
\end{equation}
where $F_A^+$ is the self-dual part of the curvature $F_A$.

As in the case of the ordinary Seiberg-Witten equations, it is convenient to  work in Sobolev spaces.
Fix $k\geq 4$, and take $L^2_k$-completion of $\A(E)\times \Gamma(S^+)$.
The $\Pin^-(2)$-monopole equations \eqref{eq:monopole} are assumed as equations for  $L^2_k$-connections/spinors.

\subsection{Gauge transformations}\label{subsec:G}
The gauge transformation group $\G$ is defined as the space of $\Spin^{c_-}(4)$-equivariant diffeomorphisms of $P$ covering the  identity map of the quotient $P/\Pin^-(2)$.
Then, $\G$ can be identified with $\Gamma(P\times_{\ad}\Pin^-(2))$, where $\ad$ means the adjoint representation on $\Pin^-(2)$ by the $\Pin^-(2)$-component of $\Spin^{c_-}(4)$.
Note that $\Lie\G \cong\Gamma(P\times_{\ad}i\R )\cong \Omega^0(X;i\lambda)$.
We take $L^2_{k+1}$-completion of $\G$.

Let us look at $\G$ more closely.
Recall that $\Pin^-(2) =\U(1)\cup j\U(1)$.
For $u,z\in\U(1)$, note that 
\begin{equation}\label{eq:ad}
\begin{aligned}
\ad_z(u) &=zuz^{-1} =u,\\ 
\ad_{jz}(u) &= jzuz^{-1}j^{-1}=u^{-1},\\ 
\ad_z(ju)&= z^2ju =z^2u^{-1}j, \\
\ad_{jz}(ju)&=z^{-2}ju^{-1}=z^{-2}uj.
\end{aligned}
\end{equation}
Therefore the adjoint action preserves the component of $\Pin^-(2)$.
Then $\G$ can be decomposed into $\G=\G_0\cup\G_1$, where $\G_0=\Gamma(P\times_{\ad}\U(1))$ and  $\G_1=\Gamma(P\times_{\ad}j\U(1))$.
To understand $\G_0$ and $\G_1$, we note the next proposition.
\begin{Proposition}\label{prop:ad}
The bundle $P\times_{\ad}\U(1)$ is identified with $\X\times_{\{\pm 1\}}\U(1)$, where $\{\pm 1\}$ acts on $\U(1)$ by complex conjugation.
The bundle $P\times_{\ad}j\U(1)$ is identified with the bundle $S(E)$ of unit vectors of $E$.
\end{Proposition}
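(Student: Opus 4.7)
The plan is to realize each of the two bundles on the left-hand side as an associated bundle to a quotient of $P$ by exhibiting an explicit $\Spin^{c_-}(4)$-equivariant map on fibers, where the fiber on the source is the relevant connected component of $\Pin^-(2)$. Both parts reduce to combining the formulas in \eqref{eq:ad} with the standard principle that for a normal subgroup $H\triangleleft G$ acting trivially on $F$ one has $P\times_G F=(P/H)\times_{G/H}F$.

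For the first identification, the key observation is that the identity component $G_0\cong\Spinc(4)$ acts trivially on $\U(1)\subset\Pin^-(2)$ under $\ad$: the $\Spin(4)$ factor acts trivially by construction of $\ad$, and the $\U(1)\subset\Pin^-(2)$ factor acts trivially because $\U(1)$ is abelian. The $\ad$-action therefore factors through $\Spin^{c_-}(4)/G_0\cong\{\pm 1\}$, whose nontrivial coset is represented by $j$ and, by \eqref{eq:ad}, sends $u\mapsto u^{-1}$, i.e.\ the complex conjugation on $\U(1)\subset\C$. Since $P/G_0=\X$, this immediately gives
$$P\times_\ad\U(1)=\X\times_{\{\pm 1\}}\U(1)$$
with the stated $\{\pm 1\}$-action.

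For the second identification, I would parameterize $j\U(1)=\{wj:w\in\U(1)\}$ from the right and define $\Psi\colon j\U(1)\to S^1\subset\C$ by $\Psi(wj)=w$. Using the relations $zj=j\bar z$ and $j^2=-1$ in $\Pin^-(2)$, a short computation rewrites \eqref{eq:ad} as $\ad_z(wj)=z^2 wj$ and $\ad_{jz}(wj)=z^{-2}\bar w j$, so $\Psi$ converts the $\ad$-action into $w\mapsto z^2 w$ and $w\mapsto z^{-2}\bar w$ on $S^1$. These agree with the $\OO(2)$-action on $S^1\subset\R^2$ pulled back via $\varphi_0$, since $\varphi_0(z)(w)=z^2 w$ and $\varphi_0(jz)(w)=\overline{z^2 w}=z^{-2}\bar w$. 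Letting $\Spin(4)$ act trivially on both sides extends $\Psi$ to a $\Spin^{c_-}(4)$-equivariant isomorphism of fibers, so passing to associated bundles yields
$$P\times_\ad j\U(1)\cong P\times_{\bar\varphi}S^1=S(E).$$

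The main obstacle is purely bookkeeping in the second part: the coset $j\U(1)$ admits two natural parameterizations (as $ju$ or $wj$, interchanged by $u=\bar w$), and $\varphi_0$ doubles angles on the rotation component of $\Pin^-(2)$, so one must fix these conventions at the outset to ensure that the computed action on $w$ genuinely matches the geometric action on $S(E)\subset E$. Once this is done, the equivariance of $\Psi$ is a direct consequence of \eqref{eq:ad}.
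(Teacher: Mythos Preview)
Your argument is correct and essentially the same as the paper's. For the first part you use the identical factorization through $G/G_0$; for the second part your map $\Psi(wj)=w$ coincides with the paper's embedding $ju=u^{-1}j\mapsto u^{-1}$ (just relabel $w=u^{-1}$), and your verification via $\varphi_0$ is exactly the equivariance check the paper abbreviates by first describing the $G$-action on $\C$ giving $E$.
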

\begin{proof}
By \eqref{eq:ad}, the adjoint action of $G=\Spin^{c_-}(4)$ on $\U(1)$ is given by complex conjugation via the projection $G\to G/G_0=\{\pm 1\}$.  
Therefore,
$$
P\times_{\ad} \U(1) =P/G_0\times_{\{\pm 1\}}\U(1)=\X\times_{\{\pm 1\}}\U(1).
$$
Let $G$ act on $\C$ as follows: 
For $g=[s,u]\in \Spin(4)\times_{\{\pm 1\}} \U(1)$ and $w\in \C$, define the $g$-action on $w$ by $g\cdot w=z^2w$.  
For $J^\prime=[1,j]\in \Spin(4)\times_{\{\pm 1\}} j\U(1)$ and $w\in \C$, define the $J^\prime$-action on $w$ by $J^\prime\cdot w=\bar{w}$.
Then the associated bundle $P\times_G\C$ is isomorphic to $E$.
Let us embed $j\U(1)$ into $\C$ by
$$
j\U(1)\ni ju=u^{-1}j \mapsto u^{-1}\in \U(1)\subset\C.
$$
By \eqref{eq:ad}, this gives the identification between $P\times_{\ad}j\U(1)$ and $S(E)$.
\end{proof}
In fact, $\G_1$ is empty except one case.
\begin{Proposition}
$\G_1=\emptyset$ if and only if $\tilde{c}_1(E)\neq 0$.
\end{Proposition}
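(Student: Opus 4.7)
The plan is to use the identification $P\times_{\ad}j\U(1)\cong S(E)$ established in \propref{prop:ad}, which reduces the question to whether the $\OO(2)$-bundle $E$ admits a nowhere-zero section.

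First, since $\G_1=\Gamma(P\times_{\ad}j\U(1))\cong \Gamma(S(E))$, the set $\G_1$ is nonempty precisely when the unit sphere bundle $S(E)\to X$ admits a global section, equivalently when $E$ admits a nowhere-vanishing section. Such a section trivializes a rank-one subbundle $\underline{\R}\subset E$, and the orthogonal complement is then a real line bundle $L$ with $L\cong\det E=\lambda$ (the two determinants being forced to agree). Hence $\G_1\neq\emptyset$ if and only if $E\cong \underline{\R}\oplus\lambda$.

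Next, invoke \remref{rem:E}: $\OO(2)$-bundles with determinant $\lambda$ are classified by their twisted first Chern class $\tilde{c}_1(E)\in H^2(X;l)$, and $\tilde{c}_1(E)=0$ is equivalent to $E\cong\underline{\R}\oplus\lambda$. Combining this with the previous step yields
\[
\G_1\neq\emptyset \iff E\cong\underline{\R}\oplus\lambda \iff \tilde{c}_1(E)=0,
\]
which is the contrapositive of the proposition. There is no real obstacle here beyond assembling these two identifications; the main content was already carried by \propref{prop:ad} and \remref{rem:E}.
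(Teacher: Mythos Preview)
Your proof is correct and follows essentially the same approach as the paper: both use \propref{prop:ad} to identify $\G_1$ with $\Gamma(S(E))$ and then invoke \remref{rem:E} to equate $\tilde{c}_1(E)=0$ with $E\cong\underline{\R}\oplus\lambda$. You spell out the intermediate step---that a section of $S(E)$ yields a splitting $E\cong\underline{\R}\oplus\lambda$ via the orthogonal complement---which the paper leaves implicit.
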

\begin{proof}
By \propref{prop:ad}, $\G_1\cong \Gamma(S(E))$,  and $\tilde{c}_1(E) =0$ if and only if $E$ is isomorphic to $\underline{\R}\oplus i\lambda$ as $\OO(2)$-bundles with determinant line bundle $\lambda$. (Recall \remref{rem:E}.)
\end{proof}

The  $\G$-action on $\A(E)\times\Gamma(S^+)$ is given by $g(A,\Phi)= (A-2g^{-1}dg, g\Phi)$, for $g\in\G$ and $(A,\Phi)\in \A(E)\times\Gamma(S^+)$.
If $\Phi\not\equiv 0$, then $\G$-action on  $(A,\Phi)$ is free, and such an $(A,\Phi)$ is called an {\it irreducible}.
On the other hand, $(A,\Phi)$ with $\Phi\equiv 0$ is called a {\it reducible}.
The stabilizer of the $\G$-action on $(A,0)$ is the subgroup of constant sections $\{\pm 1\}\subset \G_0$, unless 
$E\cong\underline{\R}\oplus\lambda$ and $A$ is flat.
If $E\cong\underline{\R}\oplus\lambda$ and $A$ is flat, then the stabilizer is generated by the constant section $j\in\G_1$, and is isomorphic to $\Z/4$.

%
%
\subsection{Moduli spaces}
Let us define the moduli spaces $\M$ and $\M_0$ of $\Pin^-(2)$-monopoles as follows: 
$$
\M=\{\text{ solutions to \eqref{eq:monopole} }\}/\G,\quad \M_0 =\{\text{ solutions to \eqref{eq:monopole} }\}/\G_0.
$$
Then, $\M_0=\M$ unless $\tilde{c}_1(E)=0$.
If $\tilde{c}_1(E)=0$, then $\M_0$ is a double covering of $\M$.
\begin{Proposition}\label{prop:compact}
The moduli spaces $\M$ and $\M_0$ are compact.
\end{Proposition}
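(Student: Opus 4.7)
The plan is to reduce compactness to the classical Seiberg--Witten compactness theorem on the double cover $\X$. The setup of \subsecref{subsec:covering} attaches to a $\Spin^{c_-}$-structure on $X$ a genuine $\Spinc$-structure $\tilde{c}$ on $\X$ together with an antilinear involution $I$ covering the deck transformation $\iota$. First I would check that pull-back by $\pi\colon\X\to X$ sends a $\Pin^-(2)$-monopole $(A,\Phi)$ to an ordinary Seiberg--Witten monopole $(\pi^*A,\pi^*\Phi)$ for $\tilde{c}$ which is $I$-invariant in the evident sense, and that, conversely, every $I$-invariant solution on $\X$ descends to $X$; the group $\G$ then sits inside the $\Spinc$ gauge group $\tilde{\G}$ of $\X$ as the $I$-equivariant subgroup. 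One thereby obtains a homeomorphism $\M\cong\tilde{\M}^I$, where $\tilde{\M}$ is the Seiberg--Witten moduli space of $(\X,\tilde{c})$. Since $\tilde{\M}$ is compact by the classical theorem and $I$ acts continuously on it, $\tilde{\M}^I$ is closed in a compact space and hence compact. The statement for $\M_0$ then follows because the natural map $\M_0\to\M$ is either the identity or a two-sheeted covering.

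Alternatively, one can mimic the classical Seiberg--Witten proof directly on $X$. The Weitzenb\"ock formula for the $\Spin^{c_-}$ Dirac operator takes the usual form $D_A^2=\nabla_A^*\nabla_A+s/4+\rho(F_A^+)/2$, since the computation is pointwise and the $\Pin^-(2)$ factor only affects global gluing. Feeding this into $D_A\Phi=0$ together with $F_A^+=q(\Phi)$ and applying the maximum principle to $|\Phi|^2$, using the real inner product on $S^+$ from \subsecref{subsec:Dirac}, produces the pointwise bound $|\Phi|^2\leq\max\{0,-\inf_X s/4\}$. Hence $F_A^+=q(\Phi)$ is bounded in $L^\infty$; since the harmonic part of the $i\lambda$-valued $2$-form $F_A$ is topologically determined by $\tilde{c}_1(E)\in H^2(X;l)$, $F_A$ is uniformly bounded in $L^p$ for every $p$. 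Coulomb gauge fixing against a reference $\OO(2)$-connection then yields a uniform $L^2_1$ bound on $A$ modulo $\G$, after which the standard bootstrap via elliptic regularity for $D_A$ and for $d^++d^*$ gives bounds in every $L^2_k$, and Rellich compactness extracts a convergent subsequence.

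The main obstacle is verifying that each classical ingredient --- Weitzenb\"ock, the reality structure on the twisted spinor bundles, Coulomb gauge fixing, and elliptic bootstrap --- carries over intact to the $\Spin^{c_-}$ setting, where the spinor bundles carry only a real inner product and a $K$-valued twisted hermitian form. The double-cover route is cleaner: it imports the classical $\Spinc$ theory on $\X$ wholesale, at the modest cost of verifying the identification $\M\cong\tilde{\M}^I$ and the continuity of the $I$-action on $\tilde{\M}$.
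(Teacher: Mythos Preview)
Your proposal is correct and mirrors the paper exactly: the paper too states the Weitzenb\"ock formula \eqref{eq:Weitz} and declares the direct argument ``parallel to the case of the ordinary Seiberg--Witten theory,'' and later (in the remark following \propref{prop:MM}) gives the double-cover argument via the embedding $\tilde{\M}^I\hookrightarrow\tilde{\M}$. One small correction: the identification the paper proves is $\M_0\cong\tilde{\M}^I$ (since $\tilde{\G}^I\cong\G_0$, not $\G$), so the double-cover route yields compactness of $\M_0$ first, and then $\M$ is compact as a finite quotient of $\M_0$---your last sentence has the direction of the $\M_0\to\M$ deduction reversed.
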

For the Dirac operators of $\Spin^{c_-}$-structures, one can readily prove the Weitzenb\"{o}ck formula (see \cite{Morgan}, Proposition 5.1.5), 
\begin{equation}\label{eq:Weitz}
D_A^2\phi =\nabla^*_A\nabla_A\phi + \frac{\kappa}4\phi + \frac{\rho(F_A)}2\phi,
\end{equation}
where $\kappa$ is the scalar curvature of the metric on $X$.
With this understood, the proof of \propref{prop:compact} is parallel to the case of the ordinary Seiberg-Witten theory.
The compactness of $\M$ can be seen also from the relation with the Seiberg-Witten theory on the double covering as in the next subsection.

\subsection{The relation with the Seiberg-Witten theory on the double covering}
Let $\A(E)$ be the space of $\OO(2)$-connections on $E$.
As explained in \secref{subsec:covering}, for a $\Spin^{c_-}$-structure on $(X,E)$, it is induced a $\Spin^c$-structure $\ct$ on the double covering $\X$ associated to $\lambda=\det E$.
Let $\pi\colon \X\to X$ be the projection and $\iota\colon\X\to\X$ be the covering transformation.
Let $\tilde{S}^\pm$ be the spinor bundles of $\ct$, $L$ be the determinant line bundle of $\ct$, and $\A(L)$ be the space of $\U(1)$-connections on $L$.
In this situation, the $I$-action on $\CC:=\A\times \Gamma(\tilde{S}^+)$ is induced from the $I$-action on $\tilde{S}^\pm$ and $L$. 
Then, by \eqref{eq:spinor},  
$$
\Gamma(S^\pm)\cong \Gamma(\tilde{S}^\pm)^I.
$$
The relation of $\A(E)$ and $\A(L)$ is given as follows.
An $\OO(2)$-connection $A$ on $E$ and the Levi-Civita connection determine a $\Spin^{c_-}(4)$-connection $\Bbb A$ on $P$.  
Let us consider the pull-back $\Spin^{c_-}(4)$-connection $\pi^* {\Bbb A}$ on $\pi^* P\to \X$. 
Since $\pi^*P = P_0\cup P_1$ (see \subsecref{subsec:covering}), the $\Spin^{c_-}(4)$-connection $\pi^* {\Bbb A}$ has a $\Spin^c(4)$-reduction $\tilde{\Bbb A}$ on the $\Spin^c(4)$-bundle $P_0$.
Then we obtain a $\U(1)$-connection $\tilde{A}$ on $L$ from $\tilde{\Bbb A}$, and we can see that
$$
\A(E)\cong \A(L)^I.
$$

The gauge transformation group on $\X$ is given by $\tilde{\G}=\Map(\X,S^1)$.
If we define the involution $I$ on $\tilde{\G}$ by $I\tilde{u}=\overline{\iota^*\tilde{u}}$ for $\tilde{u}\in\tilde{\G}$, then the $\G$-action on $\A(L)\times\Gamma(\tilde{S^+})$ is $I$-equivariant, and  $\tilde{\G}^I\cong \Gamma(\X\times_{\{\pm 1\}}\U(1))\cong \G_0$.
Let $\CC=\A(E)\times\Gamma(S^+)$ and $\tilde{\CC} = \A(L)\times \Gamma(\tilde{S}^+)$. 
Then we have
\begin{Proposition}
$\CC/\G_0 \cong \tilde{\CC}^I/\tilde{\G}^I$. 
\end{Proposition}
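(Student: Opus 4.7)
The plan is to observe that the proposition follows formally from the three identifications already collected in this subsection: $\A(E)\cong \A(L)^I$, $\Gamma(S^+)\cong \Gamma(\tilde S^+)^I$, and $\G_0\cong \tilde{\G}^I$ (the last from \propref{prop:ad}, coupled with the identification of a section of $\X\times_{\{\pm 1\}}\U(1)$ with an $I$-invariant map $\X\to \U(1)$). Combining the first two gives an $I$-equivariant bijection $\CC\cong \tilde{\CC}^I$ with respect to the diagonal $I$-action. It therefore suffices to check that this bijection intertwines the $\G_0$-action on $\CC$ with the $\tilde{\G}^I$-action on $\tilde{\CC}^I$; once this is done, passing to orbit spaces yields the claimed isomorphism.

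For the spinor factor, I would note that the identification $\Gamma(S^+)\cong \Gamma(\tilde S^+)^I$ comes from the bundle isomorphism $\tilde S^+\cong \pi^*S^+$ recorded in \eqref{eq:spinor}. The $\G_0$-action $\Phi\mapsto g\Phi$ is fibrewise $\U(1)$-multiplication, and is carried by this isomorphism to fibrewise $\U(1)$-multiplication on $\tilde S^+$ by the $I$-invariant function $\tilde u\in\tilde{\G}^I$ that corresponds to $g$ under \propref{prop:ad}. Hence the spinor part of the action matches tautologically.

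For the connection factor, recall that the identification $\A(E)\cong \A(L)^I$ is obtained by lifting the $\Spin^{c_-}(4)$-connection ${\Bbb A}$ on $P$ determined by $A$ to the $\Spin^c(4)$-bundle $P_0\subset\pi^*P$ and extracting its determinant $\tilde A$. Under the $\G_0$-action $A\mapsto A-2g^{-1}dg$, pulling back to $\X$ and using the identification of $g\in\G_0$ with the $I$-invariant map $\tilde u\colon\X\to \U(1)$ one has $\pi^*(g^{-1}dg)=\tilde u^{-1}d\tilde u$, since exterior differentiation commutes with pullback. Therefore $A-2g^{-1}dg$ corresponds to $\tilde A-2\tilde u^{-1}d\tilde u$, and the connection part of the action matches as well.

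Combining the two factors, the $I$-equivariant bijection $\CC\cong \tilde{\CC}^I$ is equivariant with respect to the isomorphism $\G_0\cong \tilde{\G}^I$. Taking quotients gives the required isomorphism $\CC/\G_0\cong \tilde{\CC}^I/\tilde{\G}^I$, which is moreover a homeomorphism in the $L^2_k$-topology because each of the underlying identifications is continuous. The only technical obstacle is the careful bookkeeping to verify these naturality statements in the $I$-equivariant framework; conceptually, every step is direct, and no new analytic input is needed.
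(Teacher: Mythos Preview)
Your proposal is correct and follows exactly the route the paper intends: the paper states the proposition without an explicit proof, treating it as an immediate consequence of the three identifications $\A(E)\cong\A(L)^I$, $\Gamma(S^+)\cong\Gamma(\tilde S^+)^I$, and $\tilde{\G}^I\cong\G_0$ together with the $I$-equivariance of the $\tilde{\G}$-action, all recorded in the paragraphs preceding the proposition. Your write-up simply spells out the compatibility check that the paper leaves implicit.
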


Via the identifications above, the $\Spin^{c_-}$-Dirac operator $D_A\colon \Gamma(S^+)\to\Gamma(S^-)$ can be identified with the restriction of the $\Spin^c$-Dirac operator $D_{\tilde{A}}$ on $(\X,\ct)$ to the $I$-invariant part, $D_{\tilde{A}}\colon \Gamma(\tilde{S}^+)^I\to \Gamma(\tilde{S}^-)^I$.
Furthermore, the Seiberg-Witten equations on $(\X,\ct)$ is $I$-equivariant in our setting. 
Let us define the $I$-invariant Seiberg-Witten moduli space $\tilde{\M}^I$  as the space of $I$-invariant solutions divided by $\tilde{\G}^I$. 
Then, it can be checked that  $\tilde{\M}^I$  can be identified with the $\Pin^-(2)$-monopole moduli space $\M_0$:
\begin{Proposition}\label{prop:MM}
$\M_0\cong \tilde{\M}^I$.
\end{Proposition}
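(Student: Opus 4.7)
The plan is to establish \propref{prop:MM} by verifying that, once the configuration spaces are identified via $\CC \cong \tilde\CC^I$ and the gauge groups via $\G_0 \cong \tilde\G^I$, the $\Pin^-(2)$-monopole equations \eqref{eq:monopole} on $(X,E)$ become exactly the restriction of the Seiberg-Witten equations on $(\X,\ct)$ to the $I$-invariant locus. The proposition $\CC/\G_0 \cong \tilde\CC^I/\tilde\G^I$ stated just above already handles the identification of the ambient quotients, so the real content is to match the two sets of equations term by term and then pass to quotients.

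First, I would record the configuration-space correspondence explicitly. An $\OO(2)$-connection $A\in\A(E)$ and the Levi--Civita connection determine the $\Spin^{c_-}(4)$-connection $\Bbb A$ on $P$; its pullback $\pi^*\Bbb A$ reduces on the $G_0$-component $P_0\subset\pi^*P$ to a $\Spin^c(4)$-connection $\tilde{\Bbb A}$ and hence to $\tilde A\in\A(L)^I$, giving the bijection $\A(E)\cong\A(L)^I$. Combined with $\Gamma(S^+)\cong\Gamma(\tilde S^+)^I$ from \eqref{eq:spinor}, this yields $\CC\cong\tilde\CC^I$, with the $\G_0$-action corresponding under this bijection to the restriction of the $\tilde\G$-action to $\tilde\G^I$.

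Second, I would check that the two pieces of the monopole equations correspond. For the Dirac equation, the paper already observes that $D_A$ is obtained by restricting $D_{\tilde A}$ to the $I$-invariant part of $\Gamma(\tilde S^\pm)$, so $D_A\Phi=0$ if and only if $D_{\tilde A}\tilde\Phi=0$ for the corresponding $I$-invariant spinor $\tilde\Phi$. For the curvature equation, since $\tilde A$ is the pullback reduction of $\Bbb A$ on $P_0$, the curvature $F_{\tilde A}\in\Omega^2(\X;i\R)$ is the pullback of $F_A\in\Omega^2(X;i\lambda)$ under the identification of $I$-invariant $i\R$-valued forms on $\X$ with $i\lambda$-valued forms on $X$ (here $I$ acts by $\iota^*$ combined with the sign change on the fiber). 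Hence $F_{\tilde A}^+\leftrightarrow F_A^+$. For the quadratic term, one writes down the Seiberg-Witten quadratic map $\tilde q\colon\Gamma(\tilde S^+)\to\Omega^+(\X;i\R)$ in terms of $\phi i\bar\phi\in\im\HH$, and checks that the $\Spin^{c_-}(4)$-action on $\im\HH$ (which involves the sign $\varepsilon(u)$ for $u$ in the $\Pin^-(2)$-factor) is precisely what is needed to make $\tilde q$ intertwine the $I$-actions on source and target; restricted to $I$-invariant sections, it then coincides with $q\colon\Gamma(S^+)\to\Omega^+(X;i\lambda)$.

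Third, combining these two matchings, the solution set of \eqref{eq:monopole} inside $\CC$ is carried bijectively onto the set of $I$-invariant Seiberg-Witten solutions inside $\tilde\CC^I$. Since this bijection is equivariant under $\G_0\cong\tilde\G^I$, passing to quotients gives $\M_0\cong\tilde\M^I$. The main obstacle I anticipate is the careful bookkeeping for the quadratic map: one must verify that the sign $\varepsilon(u)$ built into the action on $\im\HH$ matches the antilinear part of the $I$-action on $\Omega^+(\X;i\R)$, rather than producing a spurious sign that would break the identification. All other steps are a direct transcription of the structural identifications already set up in \subsecref{subsec:covering} and this subsection.
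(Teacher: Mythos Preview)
Your proposal is correct and is precisely the verification the paper intends: the paper itself gives no detailed proof of \propref{prop:MM}, merely asserting that ``it can be checked'' once the identifications $\CC\cong\tilde\CC^I$, $\G_0\cong\tilde\G^I$, and $D_A\leftrightarrow D_{\tilde A}|_{I}$ have been set up in the preceding paragraphs. Your outline expands this into the term-by-term matching of the two equations and the passage to quotients, which is exactly what is needed; the sign check for the quadratic map that you flag is the only place requiring care, and your reasoning there is sound.
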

\begin{Remark}
The $I$-invariant moduli $\tilde{\M}^I$ can be embedded in the ordinary Seiberg-Witten moduli space $\tilde{\M}$ of $(\X,\ct)$, since $\tilde{\CC}^I/\tilde{\G}^I$ is continuously embedded in $\tilde{\CC}/\tilde{\G}$ ({\it cf.} Remark 3.4 of \cite{Nfree} or \cite{FurutaG}).
Since $\tilde{\M}$ is compact, the compactness of $\M_0$ (\propref{prop:compact}) follows from \propref{prop:MM}, too. 
\end{Remark}
\begin{Remark}
In general, $\M$ ($\tilde{\M}^I$) could be non-orientable.
A similar but slightly different situation is studied by Tian-Wang \cite{TW}.
They investigate the Seiberg-Witten theory in the presence of real structures on almost complex $4$-manifolds.
They also introduce an antilinear involution on the Seiberg-Witten theory. 
Their involution  is different from ours in that they use the real structure to define the involution.
\end{Remark}
\begin{Remark}\label{rem:unique}
Since the $\Spin^{c_-}$-Dirac operator $D_A$ is the $I$-invariant part of 
$D_{\tilde{A}}$, the unique continuation theorem holds also for $D_A$. 
Of course, this can be proved directly.
\end{Remark}
%
\subsection{The deformation complex}

When $(A,\Phi)$ is a solution of $\Pin^-(2)$-monopole equation, the deformation complex for $\M$ $(\M_0)$ at $(A,\Phi)$ is given as follows:
\begin{equation}\label{eq:deform}
0\to \Omega^0(X;i\lambda)\overset{\alpha}{\to}\Omega^1(X;i\lambda)\oplus\Gamma(S^+) \overset{\beta}{\to} \Omega^+(X;i\lambda)\oplus\Gamma(S^-)\to 0,
\end{equation}
where the maps $\alpha$ and $\beta$ are the linearizations of the $\G$-action and the $\Pin^-(2)$-monopole equations, and given by
$\alpha(f)=(-2df,f\Phi)$, $\beta(a,\phi)= (D_A\phi+\frac12 \rho(a)\Phi, d^+a - Dq_\Phi(\phi))$, where $Dq_\Phi$ is the linearization of $q$ at $\Phi$.

Let $(\tilde{A},\tilde{\Phi})$ be the $I$-invariant solution on $(\X,\ct)$ corresponding to $(A,\Phi)$. Then the deformation complex \eqref{eq:deform} can be identified with the restriction of the ordinary Seiberg-Witten deformation complex at $(\tilde{A},\tilde{\Phi})$ to its $I$-invariant part:
\begin{equation}\label{eq:I-deform}
0\to \Omega^0(\X;i\R)^I\to(\Omega^1(\X;i\R)\oplus\Gamma(\tilde{S}^+))^I \to (\Omega^+(\X;i\R)\oplus\Gamma(\tilde{S}^-))^I\to 0,
\end{equation}
where the $I$-action on forms is given by the composition of the pullback by $\iota$ and the complex conjugation.
For calculation of the index of \eqref{eq:deform}, $0$-th order terms can be neglected, and therefore, the complex \eqref{eq:deform} can be assumed to be a direct sum of the de Rham part and the Dirac part. ({\it Cf.} \cite{Morgan}, 4.6.)
The de Rham part is:
$$
0\to\Omega^0(X;i\lambda)\overset{d}{\to}\Omega^1(X;i\lambda)\overset{d^+}{\to}\Omega^+(X;i\lambda)\to 0.
$$ 
The index of the Dirac part is calculated by applying the Lefschetz formula to the $I$-equivariant Dirac operator $D_{\tilde{A}}$ on $(\X,\ct)$. 
More precisely, since the $I$-action is not complex linear, complexify the operator first, and then apply the Lefschetz formula \cite{AB}. 
Then the index of the Dirac part above is half of the index of $D_{\tilde{A}}$ because the $\iota$-action on $\X$ is free.
Thus we have,
\begin{Proposition}
The virtual dimension $d$ of $\M$ is given by 
\begin{equation}\label{eq:d}
d = \frac14(\tilde{c}_1(E)^2-\sign(X)) - (b_0(X;l) - b_1(X;l) + b_+(X;l)),
\end{equation}
where $\tilde{c}_1(E)\in H^2(X;l)$ is the twisted first Chern class. 
(See \remref{rem:E}.)
\end{Proposition}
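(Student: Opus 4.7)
The plan is to compute the index of the deformation complex \eqref{eq:deform} by identifying it, via \propref{prop:MM}, with the $I$-invariant part of the ordinary Seiberg--Witten deformation complex on $(\tilde X,\tilde c)$, and then treating the two subcomplexes separately. As the author indicates, only the principal symbols contribute to the index, so after the standard reduction (dropping the $0$-th order terms in $\alpha$ and $\beta$) the complex splits as the sum of the twisted de Rham subcomplex
\[
0\to\Omega^0(X;i\lambda)\xrightarrow{d}\Omega^1(X;i\lambda)\xrightarrow{d^+}\Omega^+(X;i\lambda)\to 0
\]
and the single Fredholm operator $D_A\colon\Gamma(S^+)\to\Gamma(S^-)$. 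The de Rham piece is routine: since $\lambda=l\otimes\R$, its cohomology is $H^0(X;\lambda)$, $H^1(X;\lambda)$, $H^+(X;\lambda)$, whose dimensions equal $b_0(X;l)$, $b_1(X;l)$, $b_+(X;l)$, and the Euler characteristic enters the moduli dimension with the negative sign $-(b_0(X;l)-b_1(X;l)+b_+(X;l))$.

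For the Dirac piece I would pass to the double cover and compute $\text{ind}_\R D_A$ as the index of $D_{\tilde A}$ restricted to its $I$-invariant subspace. Treating $D_{\tilde A}$ as a $\Z/2$-equivariant real-linear Fredholm operator (the trick alluded to by ``complexify the operator first''), the Atiyah--Bott Lefschetz formula applies to the involution $I$, and because the underlying involution $\iota$ on $\tilde X$ is free the fixed-point contribution is zero. Hence $\text{tr}(I\mid\ker D_{\tilde A})-\text{tr}(I\mid\coker D_{\tilde A})=0$, so the $I$-invariant and $I$-anti-invariant subspaces contribute equally, yielding
\[
\text{ind}_\R D_A \;=\; \tfrac12\,\text{ind}_\R D_{\tilde A} \;=\; \text{ind}_\C D_{\tilde A}.
\]

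Now the ordinary Atiyah--Singer formula on $\tilde X$ gives $\text{ind}_\C D_{\tilde A}=(c_1(L)^2-\sign\tilde X)/8$, and two elementary facts about the double cover $\pi\colon\tilde X\to X$ convert this into an expression intrinsic to $(X,E)$: first, multiplicativity of the signature for finite covers, $\sign\tilde X=2\sign X$; and second, since $\pi^*\tilde c_1(E)=c_1(L)\in H^2(\tilde X;\Z)$ and $\pi_*[\tilde X]=2[X]$, the identity $(\pi^*\alpha)\smile(\pi^*\alpha)[\tilde X]=2(\alpha\smile_l\alpha)[X]$ yields $c_1(L)^2=2\tilde c_1(E)^2$. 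Substituting gives $\text{ind}_\R D_A=\tfrac14(\tilde c_1(E)^2-\sign X)$, and adding the de Rham contribution produces \eqref{eq:d}.

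The only delicate point, and the main obstacle worth being careful about, is the justification of the ``halving'' step: the involution $I$ is antilinear rather than complex-linear, so one cannot directly apply the complex equivariant index theorem. The cleanest remedy is to regard $\tilde S^\pm$ and $D_{\tilde A}$ as underlying real objects on which $I$ acts as a genuine $\Z/2$-action; then Lefschetz for a free $\Z/2$-action gives the halving, and the equality $\tfrac12\text{ind}_\R D_{\tilde A}=\text{ind}_\C D_{\tilde A}$ follows from the elementary fact that complex operators have real index equal to twice their complex index. Everything else is a bookkeeping application of Atiyah--Singer on $\tilde X$ together with the two geometric identities about $\pi$.
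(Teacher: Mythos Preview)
Your proposal is correct and follows essentially the same route as the paper: split the deformation complex into its de~Rham and Dirac parts, compute the de~Rham piece by twisted Hodge theory, and for the Dirac piece pass to $(\tilde X,\tilde c)$, handle the antilinearity of $I$ by working real-linearly (equivalently, complexifying), and use the Lefschetz formula together with freeness of $\iota$ to halve the index. You add the explicit conversion from $\ind_\C D_{\tilde A}$ on $\tilde X$ back to $\tfrac14(\tilde c_1(E)^2-\sign X)$ via $\sign\tilde X=2\sign X$ and $c_1(L)^2=2\,\tilde c_1(E)^2$, which the paper leaves implicit; this is a useful detail and the computation is correct.
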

\begin{Remark}
Note that $b_0(X;l) =0$ if $X$ is connected and $l$ is nontrivial.
\end{Remark}

\subsection{The topology of $\CC^*/\G_0$}\label{subsec:B}

Let $\CC^*$ be the space of irreducibles, i.e., $\CC^*=\A(E)\times(\Gamma(\tilde{S}^+)\setminus 0)$.
The purpose of this subsection is to prove the following proposition.

\begin{Proposition}\label{prop:B}
The space $\CC^*/\G_0$ has the same homotopy type with $$\RP^\infty\times T^{b_1(X;\lambda)}.$$
\end{Proposition}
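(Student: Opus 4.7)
The plan is to identify $\CC^*/\G_0$ as the classifying space of $\G_0$ and then compute the homotopy type of $\G_0$ itself.

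First, I would show that $\CC^*=\A(E)\times(\Gamma(S^+)\setminus\{0\})$ is contractible: $\A(E)$ is an affine space modelled on $\Omega^1(X;i\lambda)$, and $\Gamma(S^+)\setminus\{0\}$ is the complement of a point in an infinite-dimensional vector space. The $\G_0$-action on $\CC^*$ is free: if $g\in\G_0$ stabilizes $(A,\Phi)\in\CC^*$, then $g\cdot A=A$ forces $g^{-1}dg=0$, so $g$ is locally constant; since $\X$ is connected this makes $g$ a constant element of $\{\pm 1\}\subset\G_0$, and then $g\Phi=\Phi$ with $\Phi\not\equiv 0$ gives $g=1$. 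Hence $\CC^*/\G_0$ realizes the classifying space $B\G_0$.

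Second, I would compute $\G_0$ up to homotopy by analyzing its identity component and its component group. The exponential $\exp\colon \Omega^0(X;i\lambda)\to\G_0$, $f\mapsto e^f$, has kernel equal to $2\pi i$ times the locally constant, anti-$\iota$-invariant $\Z$-valued functions on $\X$; this kernel vanishes because $\X$ is connected and $l$ is nontrivial. Its image is path-connected, and any path from $1$ in $\G_0$ lifts uniquely to $\Omega^0(X;i\lambda)$ (the $I$-anti-equivariance of the lift being forced by uniqueness of $\R$-lifts of $S^1$-valued paths). Thus $\G_0^0\cong \Omega^0(X;i\lambda)$ is a contractible real vector space. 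For the component group, I would apply $\X\times_{\{\pm 1\}}(-)$ to $0\to\Z\to\R\to S^1\to 0$ (with $\{\pm 1\}$ acting by multiplication on $\Z,\R$ and by conjugation on $S^1$) to obtain a short exact sequence of abelian sheaves on $X$, $0\to l\to\underline{\lambda}\to \tilde{S}^1\to 0$. Since the sheaf $\underline{\lambda}$ of continuous sections of $\lambda$ is fine, the long exact sequence collapses to $0\to\Gamma(\lambda)\to\G_0\to H^1(X;l)\to 0$, identifying $\pi_0(\G_0)\cong H^1(X;l)$.

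Combining these, $\G_0$ is homotopy equivalent to the discrete group $H^1(X;l)$, so $\CC^*/\G_0\simeq B\G_0\simeq K(H^1(X;l),1)$. To recover $\RP^\infty\times T^{b_1(X;\lambda)}$ one needs a splitting $H^1(X;l)\cong \Z/2\oplus \Z^{b_1(X;\lambda)}$: the $\Z/2$-summand is spanned by the class of the constant gauge transformation $-1\in\G_0$, which does not lie in $\G_0^0$ because the constant $1/2$ is never a section of the nontrivial $\R$-bundle $\lambda$, while the free part matches $b_1(X;\lambda)=\rank H^1(X;l)$.

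The main obstacle is pinning down the torsion structure of $\pi_0(\G_0)=H^1(X;l)$. A cleaner route that bypasses this point is to carry out the quotient in two stages. First divide $\CC^*$ by the free $\{\pm 1\}$-subaction (which is trivial on $\A(E)$ and antipodal on $\Gamma(S^+)\setminus\{0\}$), producing $\A(E)\times\RP^\infty\simeq \RP^\infty$. Then divide by the residual $\G_0/\{\pm 1\}$, whose identity component is still $\G_0^0$ (contractible) and whose component group acts by translations on the affine $\A(E)$-factor and trivially up to homotopy on $\RP^\infty$, thereby yielding the torus factor $T^{b_1(X;\lambda)}$ in the quotient.
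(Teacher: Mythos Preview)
Your approach is essentially the paper's: show $\CC^*$ is contractible, observe that $\G_0$ acts freely so $\CC^*/\G_0\simeq B\G_0$, identify the identity component of $\G_0$ with $\Omega^0(X;i\lambda)$ via the exponential, and compute $\pi_0\G_0\cong H^1(X;l)$ from the sheaf exponential sequence $0\to l\to\CC^\infty(\lambda)\to\CC^\infty(\kappa)\to 0$ (exactly the sequence the paper uses). The only divergence is in the final identification $H^1(X;l)\cong\Z/2\oplus\Z^{b_1(X;l)}$: the paper simply invokes the universal coefficient theorem, using that $H_0(X;l)\cong\Z/2$ for $X$ connected and $l$ nontrivial, so that $\Ext(H_0(X;l),\Z)\cong\Z/2$ and the UCT sequence splits since $\Hom(H_1(X;l),\Z)$ is free.

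Your proposed ``two-stage'' route does not actually bypass this torsion computation. After dividing by $\{\pm 1\}$ you must still divide by $\G_0/\{\pm 1\}$, and to conclude that the resulting quotient is $\RP^\infty\times T^{b_1(X;l)}$ you need $\G_0/\{\pm 1\}\simeq \Z^{b_1(X;l)}$, i.e.\ that $\pi_0(\G_0)/\langle[-1]\rangle$ is free abelian. You have correctly shown that $[-1]$ is a nontrivial $2$-torsion element of $\pi_0\G_0$, but not that it generates \emph{all} the torsion of $H^1(X;l)$; without that, the residual component group could have torsion and the second-stage quotient need not be a torus. The one-line UCT argument is the cleanest way to close this gap, and is what the paper does.
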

The proof is divided into several steps.
\begin{Lemma}\label{lem:contractible}
The space $\CC^*$ is contractible.
\end{Lemma}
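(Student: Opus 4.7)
The plan is to show that $\CC^* = \A(E) \times (\Gamma(S^+) \setminus \{0\})$ is contractible by decomposing it as a product of two contractible factors.

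First I would handle the connection factor. The space $\A(E)$ of $\OO(2)$-connections on $E$ is an affine space over the vector space $\Omega^1(X; i\lambda)$: once a base connection $A_0$ is fixed, every other connection is of the form $A_0 + a$ with $a \in \Omega^1(X; i\lambda)$ (in the appropriate $L^2_k$ completion). Any affine space over a topological vector space is contractible via a straight-line homotopy to the base point, so $\A(E) \simeq *$.

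Next I would handle the spinor factor $\Gamma(S^+) \setminus \{0\}$. The Sobolev completion $L^2_k(S^+)$ is an infinite-dimensional separable real Hilbert space using the $L^2$ pairing coming from the real inner product $\langle\cdot,\cdot\rangle_\R$ defined in \subsecref{subsec:Dirac}. Radial retraction $\Phi \mapsto \Phi/\|\Phi\|_{L^2}$ gives a deformation retraction of $\Gamma(S^+) \setminus \{0\}$ onto the unit sphere in this Hilbert space, and by the theorem of Kakutani (or Bessaga) the unit sphere in any infinite-dimensional Hilbert space is contractible. Composing, $\Gamma(S^+) \setminus \{0\}$ is contractible.

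Taking the product gives $\CC^* \simeq *$, which is the claim. There is essentially no obstacle: the only point that deserves care is choosing a norm on $\Gamma(S^+)$ that respects the real structure (since $S^+$ carries only the twisted Hermitian form valued in $K$, not an honest Hermitian form), but the real part $\langle\cdot,\cdot\rangle_\R$ constructed in \subsecref{subsec:Dirac} supplies exactly what is needed for the $L^2$ completion and the radial deformation.
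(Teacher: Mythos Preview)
Your argument is correct and essentially coincides with the paper's own proof: the paper simply observes that $\CC^*$ is the complement of a linear subspace of infinite codimension, hence has the homotopy type of an infinite-dimensional sphere and is contractible. Your decomposition into the affine factor $\A(E)$ and the punctured spinor space, followed by the radial retraction and the Kakutani/Bessaga theorem, just makes this observation explicit.
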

\begin{proof}
Note that $\CC^*=\A(E)\times(\Gamma(\tilde{S}^+)\setminus 0)$ is the complement of a linear subspace with infinite codimension. 
Therefore $\CC^*$ has the homotopy type of an infinite dimensional sphere, and is contractible.
\end{proof}

Since $\G_0$ acts on $\CC^*$ freely, \lemref{lem:contractible} implies that $\CC^*/\G_0$ has the homotopy type of the classifying space $B\G_0$.
Hence, \propref{prop:B} follows from the next lemma.
\begin{Lemma}\label{lem:GI}
$\G_0\simeq (\Z/2)\times \Z^{b_1(X,l)}$.
\end{Lemma}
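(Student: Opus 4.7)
The plan is to convert the lemma into a cohomological statement via the exponential sheaf sequence, and then extract the structure of $H^1(X;l)$ from the Cartan--Leray spectral sequence for $\pi\colon \X \to X$.

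First I would use the short exact sequence of sheaves of abelian groups on $X$,
$$
0 \to l \to \lambda \xrightarrow{\exp(2\pi i\,\cdot\,)} \X\times_{\{\pm 1\}}\U(1) \to 0,
$$
where each object is identified with its sheaf of smooth sections (and $l$ with its locally constant sheaf). The middle term is a $C^\infty_X$-module, hence fine and acyclic; and $H^0(X;l) = 0$ since $l$ is nontrivial and $X$ is connected. The long exact sequence of global sections therefore collapses to
$$
0 \to \Omega^0(X;\lambda) \to \G_0 \to H^1(X;l) \to 0.
$$
Because $\Omega^0(X;\lambda)$ is a contractible topological vector space and $H^1(X;l)$ is discrete, this exhibits $\G_0$ as a principal bundle over $H^1(X;l)$ with contractible fibers. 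Consequently $\G_0 \simeq H^1(X;l)$ as topological spaces, and $\pi_0(\G_0) \cong H^1(X;l)$.

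It remains to show $H^1(X;l) \cong \Z/2 \oplus \Z^{b_1(X;l)}$, which I would read off from the Cartan--Leray spectral sequence for $\X \to X$. Because $\pi_1(\X)$ lies in the kernel of the character defining $l$, the pullback $\pi^*l$ is trivial as a local system on $\X$; however the deck involution $\iota$ acts on the trivialization by $-1$, so the coefficient system on the $E_2$-page is the sign module $\Z_-$:
$$
E_2^{p,q} = H^p(\Z/2;\, H^q(\X;\Z)_-) \Longrightarrow H^{p+q}(X;l).
$$
The relevant $E_2$-terms are $E_2^{1,0} = H^1(\Z/2;\Z_-) = \Z/2$ and $E_2^{0,1} = \ker(1+\iota^*) \subset H^1(\X;\Z)$, which is torsion free (as $H^1(\X;\Z)$ is) and, by the $\iota^*$-eigenspace decomposition of $H^1(\X;\Q)$, of rank $b_1(\X) - b_1(X) = b_1(X;l)$. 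Moreover $E_2^{2,0} = H^2(\Z/2;\Z_-) = 0$, killing the potential $d_2$ out of $E_2^{0,1}$, and no higher differential interferes. The edge filtration therefore gives a short exact sequence
$$
0 \to \Z/2 \to H^1(X;l) \to \Z^{b_1(X;l)} \to 0,
$$
which splits because $\Z^{b_1(X;l)}$ is free abelian. Combined with the first step, this yields $\G_0 \simeq \Z/2 \times \Z^{b_1(X;l)}$.

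The main thing to be careful about is the sign bookkeeping: the fact that $\iota$ acts by $-1$ on the trivialization of $\pi^*l$ is exactly what produces the sign module $\Z_-$ at $E_2$ and the nontrivial $\Z/2$ factor---without the twist, $H^1(\Z/2;\Z)$ would vanish and no torsion would appear. Everything else reduces to standard spectral sequence bookkeeping.
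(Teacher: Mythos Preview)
Your argument is correct. The first step, reducing to the computation of $H^1(X;l)$ via the exponential sheaf sequence $0\to l\to \CC^\infty(\lambda)\to \CC^\infty(\kappa)\to 0$, is exactly what the paper does; your extra sentence explaining why $\G_0\simeq\pi_0\G_0$ (contractible identity component) is a detail the paper leaves implicit. Where you diverge is in computing $H^1(X;l)$: the paper simply invokes the universal coefficient theorem, using that $H_0(X;l)=\Z/2$ (the coinvariants of the sign module) so that $\Ext^1(H_0(X;l),\Z)=\Z/2$ supplies the torsion and $\Hom(H_1(X;l),\Z)\cong\Z^{b_1(X;l)}$ supplies the free part. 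Your Cartan--Leray route is longer but equally valid; it has the minor advantage of making the geometric origin of the $\Z/2$ visible as $H^1(\Z/2;\Z_-)$ coming from the deck group, whereas the UCT argument is a one-liner once you know $H_0(X;l)$. Either way the splitting is automatic since the quotient is free.
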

\begin{proof}
We will prove $\pi_0\G_0$ is isomorphic to $H^1(X;l)$.
Then the lemma follows because $H^1(X;l)=\Z/2\oplus \Z^{b_1(X,l)}$ which is proved by the universal coefficient theorem.
To prove the isomorphism $\pi_0\G_0\cong H^1(X;l)$, one can use obstruction theory.
As an alternative way of the proof, we use sheaf cohomology.
Let us define the bundles $\lambda$ and $\kappa$ over $X$ by $\lambda=l\otimes\R$ and $\kappa=\X\times_{\{\pm 1\}}S^1$, and 
let $\CC^\infty(\lambda)$ and $\CC^\infty(\kappa)$ be the sheaves on $X$ of germs of $C^\infty$-sections of $\lambda$ and $\kappa$, respectively.
Then there is the short exact sequence of sheaves:
$$
1\to l\to {\cal C}^\infty(\lambda)\to{\cal C}^\infty(\kappa)\to 1.
$$
The long exact sequence is induced:
\begin{multline*}
0\to H^0(X;l)\to H^0(X;\CC^\infty(\lambda))\to H^0(X;\CC^\infty(\kappa))\\
\to H^1(X;l)\to H^1(X;\CC^\infty(\lambda))\to H^1(X;\CC^\infty(\kappa))\to\cdots.
\end{multline*}
Now, the lemma follows because $H^i(X;\CC^\infty(\lambda))=0$ and $\pi_0\G_0\cong H^0(X;\CC^\infty(\kappa))$.
\end{proof}

As mentioned above, the $\G_0$-action on $\A(E)$ is not free.
We will need a subgroup of $\G_0$ which acts on $\A(E)$ freely defined as follows.
Let us take a closed loop $\gamma\colon S^1\to X$ so that the restriction of $\lambda$ to $\gamma$, $\lambda|_\gamma=\gamma^*\lambda$, is a nontrivial $\R$-bundle over $\gamma$.
Let $\tilde\gamma\to\gamma$ be the connected double covering of $\gamma$.
Let us define $\G_\gamma$ by $\G_\gamma= \Gamma(\tilde\gamma\times_{\{\pm 1\}}\U(1))$ where the $\{\pm 1\}$-action on $\U(1)$ is given by complex conjugation.
Then $\G_\gamma$ has the following properties:
\begin{itemize}
\item By restricting $\G_0$ to $\gamma$, we have a surjective homomorphism 
$
\G_0\to \G_\gamma.
$
\item $\G_\gamma$ has two components. 
Therefore $\pi_0\G_\gamma\cong\{\pm 1\}$.
\item By restriction and projection, we have a surjective homomorphism 
$$
\theta_\gamma\colon\G_0\to\pi_0\G_\gamma=\{\pm 1\}.
$$
\end{itemize}
Let us define $\K_{\gamma} =\ker \theta_\gamma$.
\begin{Remark}
Let $\{\pm 1\}$ be the subgroup of constant sections in $\G_0$, and let us consider the exact sequence:
$$
1\to\{\pm 1\}\to\G_0\to \G_0/\{\pm 1\}\to 1.
$$
Then $\G_0/\{\pm 1\}$ is homotopy equivalent to $\Z^{b_1(X;l)}$, and the map $\theta_{\gamma}$ gives a splitting of the sequence.
\end{Remark}
%
%

\subsection{The cut-down moduli space}

Since the moduli space $\M_0$ is not necessarily a manifold, we need to perturb the equations. 
As in the Seiberg-Witten case, we will perturb the second equation of \eqref{eq:monopole} by adding an $i\lambda$-valued self-dual $2$-form. 
On the other hand, as we will see later (\subsecref{subsec:mu}), the whole theory of $\Pin^-(2)$-monopole equations can be considered as a family over a torus $T^{b_1(X;\lambda)}$. 
For our purpose, we will cut down the moduli space along a fiber over a point in $T^{b_1(X;\lambda)}$.
These are the tasks of this subsection.

Let us define the $\G_0$-equivariant map 
$$\muh\colon   L^2_k(\A(E)\times\Gamma(S^+))\times L^2_{k-1}(\Omega^+(i\lambda))\to L^2_{k-1}(\Gamma(S^-)\times\Omega^+(i\lambda))
$$ by 
$$
\muh(A,\Phi,\eta) = (D_A\Phi,F_A^+ -q(\Phi)-\eta),
$$
where $\G_0$ acts on $\Omega^+(i\lambda)$ trivially, and  $\G_0$ is completed by $L^2_{k+1}$.
We suppose $k\geq 4$ so that $L^2_{k-1}\subset C^0$.
(Below we omit the symbol $L^2_m$.)
Let us fix a reference connection $A_0\in\A(E)$.
Then $\A(E)$ can be identified with $A_0+\Omega^1(i\lambda)$.
Let us consider the $L^2$-orthogonal splitting: $\Omega^1(i\lambda)=\ker d\oplus(\ker d)^\perp$.
Then $\muh$ can be considered as a map from $\ker d\times(\ker d)^\perp\times \Gamma(S^+)\times\Omega^+(i\lambda)$.
\begin{Proposition}\label{prop:U}
Suppose $b^+(X;\lambda)=0$.
If $\ind D_{A_0}\geq 0$, then there exists a gauge invariant open-dense subset $\cal U$ of $\ker d\times\Omega^+(i\lambda)$ which has the property that the restriction of $\muh$ to ${\cal U^\prime={\cal U}}\times(\ker d)^\perp\times\Gamma(S^+)$ 
has $0$ as regular value. 
\end{Proposition}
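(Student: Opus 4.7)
The plan is a standard Sard--Smale transversality argument applied to $\muh$. Fix the reference connection $A_0 \in \A(E)$ and use the splitting $\A(E) = A_0 + \ker d \oplus (\ker d)^\perp$, so that connections are parametrized by $(a_0, a_1)$ with $a_0 \in \ker d$ and $a_1 \in (\ker d)^\perp$. Regarding $\muh$ as a smooth Fredholm map on the appropriate Sobolev completions, I first compute the linearization at a solution $(a_0, a_1, \Phi, \eta) \in \muh^{-1}(0)$:
\begin{equation*}
D\muh(\dot a_0, \dot a_1, \dot\Phi, \dot\eta) = \bigl(D_A \dot\Phi + \tfrac12\rho(\dot a_0 + \dot a_1)\Phi,\ d^+(\dot a_0 + \dot a_1) - Dq_\Phi(\dot\Phi) - \dot\eta\bigr),
\end{equation*}
where $A = A_0 + a_0 + a_1$. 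The aim is to verify that $D\muh$ is surjective at every zero; then $\muh^{-1}(0)$ is a Banach submanifold, and Sard--Smale applied to the projection $\pi\colon \muh^{-1}(0) \to \ker d \times \Omega^+(i\lambda)$ produces a residual set $\mathcal{U}$ of regular values, which is the candidate for the open--dense set in the statement.

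Surjectivity onto the $\Omega^+(i\lambda)$-factor is immediate by varying $\dot\eta$. For the $\Gamma(S^-)$-factor, at irreducibles ($\Phi \not\equiv 0$) I would use the standard Seiberg--Witten argument: any $\gamma \in \Gamma(S^-)$ that is $L^2$-orthogonal to the image must lie in $\ker D_A^*$ (taking $\dot a = 0$) and satisfy $\langle \rho(\dot a)\Phi,\gamma\rangle_{L^2} = 0$ for all $\dot a \in \Omega^1(i\lambda)$; combined with the twisted hermitian structure on the spinor bundles and the unique continuation property of $D_A$ (\remref{rem:unique}), this forces $\gamma \equiv 0$.

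The substantive step is the reducible locus ($\Phi \equiv 0$). Since $b_+(X;\lambda) = 0$, the map $d^+$ is surjective, so every parameter $(a_0,\eta)$ supports a reducible solution, and the Clifford term $\rho(\dot a)\Phi$ vanishes identically; surjectivity onto $\Gamma(S^-)$ now reduces to requiring that $D_A$ itself be surjective at the reducible connection. Here the hypothesis $\ind D_{A_0} \geq 0$ enters essentially: because $\ind D_A = \ind D_{A_0} \geq 0$ is preserved under continuous deformation, and because $(a_0,\eta)$ together supply an infinite-dimensional family of perturbations of $A$ (the $\eta$-direction varies $a_1$ modulo $\ker d^+$, while $a_0$ varies $A$ along $\ker d$), a Smale-type argument shows that the locus of $(a_0,\eta)$ for which the associated reducible has non-surjective Dirac operator is meager. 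This reducible transversality is the principal technical obstacle; unlike the classical $b^+>0$ case, one cannot escape reducibles by a generic perturbation.

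Once $D\muh$ is surjective on $\muh^{-1}(0)$, $\pi$ is Fredholm of index equal to the virtual dimension $d$ of $\M$, and Sard--Smale yields a residual (hence dense) set $\mathcal{U} \subset \ker d \times \Omega^+(i\lambda)$ of regular values, open by upper semi-continuity of the Fredholm cokernel. Gauge invariance of $\mathcal{U}$ is automatic: $\G_0$ acts on $\ker d$ by translation $a_0 \mapsto a_0 - 2g^{-1}dg$ (a closed form) and trivially on $\Omega^+(i\lambda)$, and $\pi$ is equivariant with respect to this action, so the set of regular values is $\G_0$-invariant by construction. This gives the required gauge-invariant open-dense $\mathcal{U}$.
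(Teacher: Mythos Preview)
Your ingredients are correct, but the logical architecture in the final paragraph has a gap. You aim to show $D\muh$ is surjective at every zero and then apply Sard--Smale to the projection $\pi$ to produce $\mathcal U$; but your own analysis of the reducible locus shows that $D\muh$ \emph{fails} to be surjective at reducibles whose Dirac operator is not onto. Hence $\muh^{-1}(0)$ is not known to be a Banach manifold at those points, and the Sard--Smale step for $\pi$ is ill-founded as written. Saying the bad parameters are ``meager'' does not repair this: you need the complement to be \emph{open} (not merely residual) for the statement, and in any case you have not said what $\mathcal U$ actually is before invoking Sard--Smale on a space that is not yet smooth.

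The paper avoids this circularity by reversing the order. It first proves directly (your ``Smale-type argument'', made precise as \lemref{lem:dense}) that the set $\mathcal O\subset\A(E)$ of connections with $D_A$ surjective is gauge-invariant, open, and dense whenever $\ind D_{A_0}\ge 0$; openness comes from upper semicontinuity of $\dim\ker D_A^*$, density from Sard--Smale applied to $(A,\Phi)\mapsto D_A\Phi$. Since $b_+(X;\lambda)=0$, the restriction $d^+\colon(\ker d)^\perp\to\Omega^+(i\lambda)$ is a linear homeomorphism, so $(a_0,\eta)\mapsto A_0+a_0+b(\eta)$ with $d^+b(\eta)=\eta-F_{A_0}^+$ identifies $\ker d\times\Omega^+(i\lambda)$ with $\A(E)$, and one simply \emph{defines} $\mathcal U$ to be the preimage of $\mathcal O$ under this identification. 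Surjectivity of $D\muh$ at every zero in $\mathcal U'$ then follows by construction: at irreducibles via Clifford multiplication and unique continuation (exactly as you wrote), and at reducibles because $D_A$ is onto by the choice of $\mathcal U$. No second Sard--Smale pass is needed for the proposition itself; the projection $\pi$ and its regular values enter only afterward, once $\muh^{-1}(0)\cap\mathcal U'$ is already known to be smooth.
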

To prove \propref{prop:U}, we use the next lemma.
\begin{Lemma}\label{lem:dense}
Suppose $\ind D_{A_0}\geq 0$, and let ${\cal O}$ be the set of $A\in \A(E)$ such that $D_A$ is surjective.
Then $\cal O$ is a gauge invariant open-dense subset of $\A(E)$. 
\end{Lemma}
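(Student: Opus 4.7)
The plan is to verify the three properties of $\mathcal{O}$ in order of increasing difficulty. Gauge invariance is immediate: for any $g\in\G_0$ the identity $D_{g(A)}(g\Phi)=gD_A\Phi$ (standard equivariance of the $\Spin^{c_-}$-Dirac operator under $\G_0$) shows $D_{g(A)}$ and $D_A$ are conjugate Fredholm operators, so surjectivity is preserved. Openness is the upper semi-continuity of the cokernel dimension for the affine family $A\mapsto D_A$ of Fredholm operators. I would dispose of both in a line or two.

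The substantive point is density. My plan is to apply the Sard--Smale theorem to the universal zero set
$$
\mathcal{Z}=\{(A,\Phi)\in\A(E)\times(\Gamma(S^+)\setminus 0):D_A\Phi=0\}
$$
and the projection $\pi\colon\mathcal{Z}\to\A(E)$. The first task is to check that $\mathcal{Z}$ is a smooth Banach submanifold by showing that the universal map $\Psi(A,\Phi)=D_A\Phi$ has surjective differential
$$
(a,\phi)\longmapsto D_{A_0}\phi+\tfrac{1}{2}\rho(a)\Phi_0
$$
at every $(A_0,\Phi_0)\in\mathcal{Z}$. If $\xi\in\Gamma(S^-)$ is $L^2$-orthogonal to the image then $D_{A_0}^{*}\xi=0$ and $(\xi,\rho(a)\Phi_0)_{L^2}=0$ for every $a\in\Omega^1(X;i\lambda)$. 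Unique continuation for $D_{A_0}^{*}$ (\remref{rem:unique}) makes the zero set of $\xi$ nowhere dense, so I would pick $x_0$ with $\Phi_0(x_0)\neq 0$ and $\xi(x_0)\neq 0$ and use the pointwise non-degeneracy of $\rho(\cdot)\Phi_0(x_0)\colon T_{x_0}^{*}X\otimes i\lambda_{x_0}\to S^-_{x_0}$ to localize an $a$ near $x_0$ contradicting the orthogonality condition.

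With $\mathcal{Z}$ a smooth Banach submanifold, $\pi$ becomes a Fredholm map of index $\ind D_{A_0}$, and Sard--Smale yields a comeager, hence dense, set of regular values in $\A(E)$. I would close by arguing that every regular value $A$ lies in $\mathcal{O}$: when $\pi^{-1}(A)\neq\emptyset$, surjectivity of $d\pi$ at each $(A,\Phi)$ in the fiber unpacks to the inclusion $\{\rho(a)\Phi:a\in\Omega^1(X;i\lambda)\}\subset\Ima D_A$; unique continuation applied to $\Phi\in\ker D_A\setminus 0$ makes this set $L^2$-dense in $\Gamma(S^-)$, and the closedness of $\Ima D_A$ (since $D_A$ is Fredholm) forces $D_A$ to be surjective; when $\pi^{-1}(A)=\emptyset$, we have $\ker D_A=0$, and the hypothesis $\ind D_{A_0}\geq 0$ then gives $\coker D_A=0$ too.

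The main obstacle will be the surjectivity of the universal linearization, which depends on two non-trivial inputs: the unique continuation principle for the twisted Dirac operator and the pointwise non-degeneracy of the Clifford multiplication in the $\Spin^{c_-}$-setting. Both are available from the ordinary $\Spin^c$-case via the identification of $\Spin^{c_-}$-data with $I$-invariant $\Spin^c$-data on the double cover $\tilde{X}$ carried out in \subsecref{subsec:covering}.
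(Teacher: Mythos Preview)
Your proposal is correct and follows essentially the same approach as the paper: both set up the universal zero set of $(A,\Phi)\mapsto D_A\Phi$, verify surjectivity of its differential via unique continuation and the pointwise non-degeneracy of Clifford multiplication, and then apply Sard--Smale to the projection onto $\A(E)$, identifying regular values with those $A$ for which $D_A$ is onto. The only notable difference is in the openness step---the paper writes out an explicit sequential-compactness argument using elliptic estimates to show the complement is closed, whereas you appeal to the general upper semi-continuity of the cokernel dimension for a continuous family of Fredholm operators; both are standard and valid.
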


Although the proof of this lemma is standard, we will give a proof for reader's convenience.
The proof is divided into several steps.
({\it Cf.} \cite{Morgan}, Chapter 6 and \cite{Moore}, \S3.4.)

Let us define $F\colon \A(E)\times \Gamma(S^+)\to \Gamma(S^-)$ by 
$F(A,\Phi)=D_A\Phi$.
Then the differential of $F$ is given by 
$$
DF_{(A,\Phi)}(a,\phi) = D_{A} \phi +\frac12 \rho(a)\Phi, \quad \text{for } (a,\phi)\in \Omega^1(i\lambda)\times \Gamma (S^+).
$$
\begin{Lemma}\label{lem:surj}
If $F(A,\Phi) =0$ and $\Phi\neq 0$, then $DF_{(A,\Phi)}$ is surjective.
\end{Lemma}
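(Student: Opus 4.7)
The plan is to follow the standard Seiberg--Witten surjectivity argument, adapted to the twisted Clifford multiplication. First I would argue that $DF_{(A,\Phi)}$ has closed range: restricting to $a=0$ gives the elliptic Fredholm operator $D_A$ with closed image of finite codimension, and the perturbation by the bounded operator $a\mapsto \tfrac{1}{2}\rho(a)\Phi$ preserves closedness in the appropriate $L^2_{k-1}$ setting. Thus to show surjectivity it suffices to show no nonzero $\psi\in\Gamma(S^-)$ is $L^2$-orthogonal to the image.

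Suppose $\psi$ is such an element. First, by taking $a=0$ and letting $\phi$ vary, $\psi$ is $L^2$-orthogonal to $D_A(\Gamma(S^+))$. Since $D_A$ is formally self-adjoint with respect to $\langle\cdot,\cdot\rangle_\R$ (by the earlier proposition), this means $\psi$ is a weak solution of $D_A\psi=0$; elliptic regularity promotes $\psi$ to a smooth, genuine solution. Second, by taking $\phi=0$ and letting $a\in\Omega^1(X;i\lambda)$ vary, we obtain
\[
\int_X \langle \rho(a)\Phi,\psi\rangle_\R\, dvol = 0 \quad \text{for all } a.
\]
Standard bump-function localization then gives the pointwise condition $\langle \rho(\alpha)\Phi(x),\psi(x)\rangle_\R = 0$ for every $x\in X$ and every $\alpha\in T^*_xX\otimes i\lambda_x$.

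Next I would use the key pointwise linear-algebra fact: at any $x$ with $\Phi(x)\neq 0$, the real-linear map $\alpha\mapsto\rho(\alpha)\Phi(x)$ from $T^*_xX\otimes i\lambda_x$ to $S^-_x$ is surjective. Both spaces are real $4$-dimensional, and after choosing a local trivialization of $\lambda$ near $x$, the map reduces to the usual Clifford multiplication $T^*_xX\to S^-_x$ by $\Phi(x)$, which is an $\R$-linear isomorphism because $\rho(v)\colon S^+_x\to S^-_x$ is invertible for every nonzero $v\in T^*_xX$. This pointwise surjectivity forces $\psi(x)=0$ at every $x$ where $\Phi(x)\neq 0$.

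Finally, since $\Phi\not\equiv 0$, the open set $\{\Phi\neq 0\}$ is nonempty, and $\psi$ vanishes identically there. But $\psi$ solves $D_A\psi=0$, and by the unique continuation property for the $\Spin^{c_-}$-Dirac operator (Remark \ref{rem:unique}) a solution vanishing on a nonempty open set must vanish everywhere. Hence $\psi\equiv 0$, completing the proof. The only non-routine point is the pointwise surjectivity of the twisted Clifford multiplication; I expect this to be the main technical obstacle, though it follows cleanly once one trivializes $\lambda$ locally and reduces to the familiar $\Spin^c$ fact.
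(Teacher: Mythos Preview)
Your proof is correct and follows essentially the same approach as the paper: take $\psi$ orthogonal to the image, use the $\phi$-variation to get $D_A\psi=0$, use the $a$-variation together with the pointwise Clifford isomorphism to get $\psi=0$ on $\{\Phi\neq 0\}$, and conclude via unique continuation. You supply extra detail the paper omits (the closed-range argument and elliptic regularity), and you are more careful in writing the Clifford map with domain $T^*_xX\otimes i\lambda_x$, but the core argument is identical.
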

\begin{proof}
First, note that, if $\phi_x\in S^+_x$, a spinor vector over $x\in X$, is nonzero, the linear map 
$$
T^*_xX\ni a_x \mapsto \rho(a_x)\phi_x \in S^+_x
$$
is an isomorphism.
Suppose $\psi\in\Gamma(S^-)$ is perpendicular to the image of $DF_{(A,\Phi)}$.
Then by holding $\phi=0$ and varying $a$, we see $\psi=0$ on the support $U$ of $\Phi$ which is open. 
By holding $a=0$ and varying $\phi$, we have $D_A\psi =0$, and by the unique continuation theorem, $\psi$ is identically zero.
Thus the lemma is proved.
\end{proof}

By the implicit function theorem,  $N=F^{-1}(0)\cap\{\Phi\neq 0\}$ is a submanifold of $\A(E)\times \Gamma(S^+)$. 
Let $\pi_0\colon N\to \A(E)$ be the projection to the first factor,  $\pi_0(A,\Phi)=A$.
By the standard argument, it is easy to see the following. (See e.g. \cite{Nicolaescu}, \S 1.5.2.)
\begin{Lemma}
The map $\pi_0$ is Fredholm whose index is equal to $\ind D_{A_0}$.
\end{Lemma}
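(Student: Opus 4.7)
The plan is to identify $\ker d\pi_0$ and $\coker d\pi_0$ at an arbitrary $(A,\Phi)\in N$ with $\ker D_A$ and $\coker D_A$, respectively, and then invoke invariance of the Dirac index under the affine family $\A(E)$.

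The tangent space comes out immediately. Since the previous lemma shows $DF_{(A,\Phi)}$ is surjective at every point of $N$, the implicit function theorem gives
\[
T_{(A,\Phi)}N = \bigl\{(a,\phi)\in \Omega^1(X;i\lambda)\oplus\Gamma(S^+) : D_A\phi + \tfrac{1}{2}\rho(a)\Phi = 0\bigr\},
\]
and $d\pi_0(a,\phi)=a$ is simply projection onto the first factor. The kernel is therefore $\{(0,\phi):D_A\phi=0\}\cong \ker D_A$, which is finite-dimensional by ellipticity of $D_A$ on the closed manifold $X$.

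The heart of the argument is the cokernel calculation, which I regard as the only nontrivial step. An element $a\in\Omega^1(X;i\lambda)$ lies in $\im d\pi_0$ iff $\tfrac{1}{2}\rho(a)\Phi$ lies in $\im D_A$, so I would assemble the short exact sequence
\[
0\to \im d\pi_0 \to \Omega^1(X;i\lambda) \xrightarrow{\;a\,\mapsto\,\tfrac{1}{2}\rho(a)\Phi\bmod\im D_A\;} \coker D_A \to 0.
\]
Exactness at $\Omega^1(X;i\lambda)$ is built in by the definition of $\im d\pi_0$; surjectivity of the right arrow is where the surjectivity of $DF_{(A,\Phi)}$ is really used: given any $\psi\in \Gamma(S^-)$, write $\psi = D_A\phi + \tfrac{1}{2}\rho(a)\Phi$, and then $[\tfrac{1}{2}\rho(a)\Phi]=[\psi]$ in $\coker D_A$. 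This yields $\coker d\pi_0 \cong \coker D_A$, finite-dimensional, together with $\ind d\pi_0 = \dim\ker D_A - \dim\coker D_A = \ind D_A$.

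To replace $\ind D_A$ with $\ind D_{A_0}$, I would note that $D_A - D_{A_0} = \tfrac{1}{2}\rho(A-A_0)$ is a zeroth-order operator (the deformation formula recalled in \subsecref{subsec:Dirac}) and hence a compact perturbation between the fixed Sobolev completions; equivalently, $\A(E)$ is affine, hence connected, and $\{D_A\}_{A\in\A(E)}$ is a continuous family of Fredholm operators. I do not expect any genuine obstacle: the one piece of care needed is to keep the $L^2_k / L^2_{k-1}$ bookkeeping consistent, so that $\im D_A$ is closed and the cokernel identification lives in a fixed Banach-space quotient, and the hypothesis $k\geq 4$ fixed earlier handles this.
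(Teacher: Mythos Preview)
Your argument is correct and is exactly the standard computation the paper defers to (it cites \cite{Nicolaescu}, \S1.5.2, in lieu of a proof): identify $\ker d\pi_0\cong\ker D_A$ and $\coker d\pi_0\cong\coker D_A$ via the differential of $F$, then use that $A\mapsto D_A$ is a continuous family of Fredholm operators over the connected affine space $\A(E)$. The closedness of $\im d\pi_0$ follows as you note, since it is the preimage of the closed subspace $\im D_A$ under the bounded map $a\mapsto\tfrac12\rho(a)\Phi$.
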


Now let us prove \lemref{lem:dense}.
\begin{proof}[Proof of \lemref{lem:dense}]
Suppose $\ind D_{A_0}$ is nonnegative and let ${\cal O}$ be the set of regular values of $\pi_0$. 
Then $A\in{\cal O}$ if and only if  $D_A$ is surjective.
By the Sard-Smale theorem, $\cal O$ is a dense subspace of $\A(E)$.

The space $\cal O$ is gauge invariant because $F$ and $\pi_0$ is gauge equivariant. 

Let us prove $\cal O$ is open.
Note that $D_A$ is surjective if and only if its formal adjoint $D_A^*\colon L^2_k(S^-)\to L^2_{k-1}(S^+)$ is injective.
That is,
$${\cal O} = \{A\in \A(E)\,|\, \ker D_A^*=0\}.
$$
Let $\bar{\cal O}$ be the complement of $\cal O$ in $\A(E)$.
Let us prove $\bar{\cal O}$ is closed.
Recall $\A(E)$ is topologized by $L^2_k$ for fixed $k\geq 4$.
Suppose it is given a sequence $\{A_i\}_{i=1,2,\cdots}\subset \bar{\cal O}$ such that 
\begin{itemize}
\item $A_i\to A$ for some $A\in \A(E)$ in $L^2_k$, and
\item there exists a sequence $\{\phi_i\}\subset\Gamma(S^-)$ which satisfies $D_{A_i}^*\phi_i =0$ and $||\phi_i||_{L^2_k}=1$.
\end{itemize}
Let $a_i=A_i-A$. 
By the elliptic regularity, there exists some constant $C$ such that
$$
||\phi_i||_{L^2_{k+1}}\leq C(||\phi_i||_{L^2_k}+||D_A^*\phi_i ||_{L^2_k}) .
$$
Since $D_A^*\phi_i=-\frac12\rho(a_i)\phi_i$ and $a_i$ and $\phi_i$ is $L^2_k$-bounded, $D_A^*\phi_i$ is also $L^2_k$-bounded. 
Therefore $\phi_i$ is $L^2_{k+1}$-bounded. 
By Rellich's theorem, a subsequence of $\{\phi_i\}$ converge to some $\phi$ in $L^2_k$ with norm $1$.
\end{proof}

Let us prove \propref{prop:U}.
\begin{proof}[Proof of \propref{prop:U}]
({\it Cf.} \cite{Moore}, \S3.4.)
The differential of $\muh$,  
$$D\muh_{(A,\Phi, \eta)}\colon\ker d\times(\ker d)^\perp\oplus\Gamma(S^+)\oplus\Omega^+(i\lambda)\to \Gamma(S^-)\oplus\Omega^+(i\lambda)$$
 is given by
$$
D\muh_{(A,\Phi,\eta)}(a,b,\phi,\sigma) = (D_A\phi+\frac12\rho(a+b)\Phi, d^+b -Dq_{\Phi}(\phi) -\sigma).
$$
Let us consider the subset ${\cal U}\subset \ker d\times\Omega^+(i\lambda)$ consisting of $(a,\eta)\in\ker d\times\Omega^+(i\lambda)$ satisfying the following property: 
$$
\text{If $b\in(\ker d)^\perp$ satisfies $F_{A_0}^+ + d^+ b = \eta$, then $D_{A_0 + a + b}$ is surjective.} 
$$
Since the restriction of $d^+$ to $(\ker d)^\perp$ is a linear homeomorphism between $(\ker d)^\perp$ and $\Omega^+(i\lambda)$ if $b_+(X;\lambda)=0$,  the space ${\cal U}$ is gauge invariant and open-dense by \lemref{lem:dense}. 
Now we claim that, if $(A,\Phi,\eta)$ is a solution to the equation $\muh=0$, the differential $D\muh_{(A,\Phi,\eta)}|_{\cal U^\prime}$ is surjective: 
We may assume $0\in{\cal U^\prime}$.
Suppose $(\psi,c)\in\Gamma(S^+)\times \Omega^+(i\lambda)$ is perpendicular to the image of $D\muh_{(A,\Phi,\eta)}|_{\cal U^\prime}$.
By holding $\phi=0$ and $a=b=0$ and varying $\sigma$, we obtain $c$ must be $0$.
If $\Phi\not\equiv 0$, then it can be proved that $\psi\equiv 0$ by the unique continuation theorem as in the proof of \lemref{lem:surj}.
If $\Phi\equiv 0$, then $\psi$ must be $0$ by the definition of $\cal U^\prime$.
\end{proof}

By \propref{prop:U} and the implicit function theorem, 
$$
{\cal Z} = \{(A,\Phi,\eta)\in {\cal U}^\prime\,|\, \muh(A,\Phi,\eta)=0 \}
$$
is a submanifold in ${\cal U}^\prime$. 

Let us consider the projection:
$$
\tilde{\pi}\colon\ker d\times(\ker d)^\perp\times \Gamma(S^+)\times\Omega^+(i\lambda)\to\ker d\times \Omega^+(i\lambda)
.$$
Let us take a subgroup $\K_{\gamma}$ as in \subsecref{subsec:B}. 
Note that $\K_{\gamma}$ acts freely on $\ker d$, and $\ker d/\K_{\gamma}$ is isomorphic to a $b^1(X;\lambda)$-dimensional torus $T^{b_1(X;\lambda)}$.
Recall that $\cal U^\prime$ is gauge invariant.
Then, by restricting $\tilde{\pi}$ to $\cal Z$ and dividing it by $\K_{\gamma}$, we obtain a map,
$$
\pi\colon {\cal Z}/\K_{\gamma}\to T^{b_1(X;\lambda)}\times \Omega^+(i\lambda).
$$
This is a smooth map between Banach manifolds.
As in the Seiberg-Witten case, we can prove the following:
\begin{Proposition}
The map $\pi$ is a Fredholm map whose index is
$$
d^\prime = d - b^1(X;\lambda) = \ind D_{A_0}=\frac14(\tilde{c}_1(E)^2-\sign(X)).
$$
\end{Proposition}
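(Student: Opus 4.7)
The plan is to show that $\pi$ is Fredholm by computing the kernel and cokernel of its differential and identifying them with those of a standard Fredholm operator, then to apply the Atiyah--Singer index theorem on the double cover $\X$.

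First, I would fix a point $[A,\Phi,\eta]\in{\cal Z}/\K_\gamma$ and examine the linearization. By construction, $T_{[A,\Phi,\eta]}({\cal Z}/\K_\gamma)$ is identified with $\ker D\muh|_{{\cal U}'}$ quotiented by the (infinitesimal) $\K_\gamma$-action on $\ker d$, and $D\pi$ is induced from the projection onto the $\ker d$ and $\Omega^+(i\lambda)$ summands. A direct calculation then shows that $\ker D\pi$ is the kernel of the \emph{fiber} operator
$$
L_{A,\Phi}\colon(\ker d)^\perp\oplus\Gamma(S^+)\to\Gamma(S^-)\oplus\Omega^+(i\lambda),\quad(b,\phi)\mapsto\bigl(D_A\phi+\tfrac12\rho(b)\Phi,\,d^+b-Dq_\Phi(\phi)\bigr),
$$
and that $\coker D\pi=\coker L_{A,\Phi}$, since the full differential $D\muh$ is already surjective on ${\cal U}'$ and the parameter $\sigma\in\Omega^+(i\lambda)$ enters $D\muh$ as a free variable, so no extra obstructions appear in the $(a,\sigma)$-directions. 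Therefore $\ind\pi=\ind L_{A,\Phi}$.

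Second, I would compute $\ind L_{A,\Phi}$. Because the zeroth-order terms $\tfrac12\rho(b)\Phi$ and $Dq_\Phi(\phi)$ are compact (via the Sobolev embedding and Rellich compactness), they do not affect the Fredholm index, so $\ind L_{A,\Phi}=\ind D_A+\ind\bigl(d^+|_{(\ker d)^\perp}\bigr)$. By the Hodge decomposition on $1$-forms with coefficients in $\lambda$, the restricted operator $d^+|_{(\ker d)^\perp}$ has trivial kernel and cokernel isomorphic to the space of $\lambda$-valued harmonic self-dual $2$-forms, which vanishes by the assumption $b_+(X;\lambda)=0$. Hence $\ind L_{A,\Phi}=\ind D_A$.

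Third, I would compute $\ind D_A$ via the identification (from \subsecref{subsec:covering}) of $D_A$ with the $I$-invariant part of the ordinary $\Spin^c$-Dirac operator $D_{\tilde A}$ on the double cover $\X$. Since $I$ is antilinear, the real dimension of the $I$-invariant part of any finite-dimensional complex vector space equals its complex dimension, so $\ind_\R D_A=\ind_\C D_{\tilde A}$. The Atiyah--Singer index theorem gives $\ind_\C D_{\tilde A}=(c_1(L)^2-\sign\X)/8$; using that $\X\to X$ is an unramified double cover (so $c_1(L)^2=2\tilde c_1(E)^2$ and $\sign\X=2\sign X$) yields $\ind_\C D_{\tilde A}=\tfrac14(\tilde c_1(E)^2-\sign X)$. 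Finally, substituting $b_0(X;l)=0$ and $b_+(X;l)=0$ into \eqref{eq:d} confirms the equality $d-b_1(X;\lambda)=\ind D_{A_0}$. The main subtlety I anticipate is the cokernel computation in the first step: one must verify carefully that surjectivity of $D\muh$ on ${\cal U}'$ really does translate into $\coker D\pi=\coker L_{A,\Phi}$, with no gauge artefacts appearing when passing to the $\K_\gamma$-quotient. Everything else is a routine adaptation of the standard Seiberg--Witten dimension calculation.
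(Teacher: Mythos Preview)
Your proposal is correct and arrives at the same conclusion, but the linearization is organized differently from the paper's proof. The paper works with a Coulomb-type local slice for the $\K_\gamma$-action: it identifies $T({\cal Z}/\K_\gamma)$ with the kernel of the combined operator $F(\alpha,\phi,\sigma)=(-2d^*\alpha+f(\phi,\Phi),\,D\muh(\alpha,\phi,\sigma))$ on $\Omega^1(i\lambda)\times\Gamma(S^+)\times\Omega^+(i\lambda)$, and then reads off the index as $\ind D_A$ plus the index of $d^*+d^+$ restricted to the $L^2$-complement of the harmonic $1$-forms. You instead exploit the decomposition $\ker d\oplus(\ker d)^\perp$ already built into the setup and compare $D\pi$ directly with the fibre operator $L_{A,\Phi}$, getting $\ind D_A+\ind(d^+|_{(\ker d)^\perp})$. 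Both de~Rham pieces have index zero under the standing hypotheses $b_0(X;l)=b_+(X;l)=0$, so the two routes coincide; the paper's slice argument is the textbook Seiberg--Witten computation verbatim, while yours is slightly more tailored to the $\ker d$ splitting already in play.

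Your caveat about the cokernel is well placed. The identification $\coker D\pi\cong\coker L_{A,\Phi}$ does hold, but it uses two facts you should make explicit: first, surjectivity of $D\muh$ on ${\cal U}'$ ensures that the induced map $\ker d\oplus\Omega^+\to\coker L_{A,\Phi}$ (sending $(a,\sigma)$ to the class of $(-\tfrac12\rho(a)\Phi,\sigma)$) is onto; second, since $b_0(X;l)=0$ the infinitesimal gauge action maps $\Omega^0(i\lambda)$ isomorphically onto $d\Omega^0\subset\ker d$, so passing to the $\K_\gamma$-quotient introduces no kernel or cokernel corrections. With those two points recorded, your argument is complete.
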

\begin{proof}
The local slice of ${\cal K}_\gamma$-action at $(A,\Phi)$  is given by the set of elements
$$
(\alpha,\phi,\sigma)\in\Omega^1(i\lambda)\times\Gamma(S^+)\times\Omega^+(i\lambda)
$$
which are $L^2$-perpendicular to $(-2du, u\Phi,0)$ for every $u\in\Omega^0(i\lambda)$.
Let us define $f(\phi,\Phi)\in \Omega^0(i\lambda)$ by the relation 
$$
\langle \phi, u\Phi\rangle_\R = \langle f(\phi,\Phi), u\rangle_{i\lambda},
$$ 
where $\langle \cdot, \cdot \rangle_{i\lambda}$ is the natural metric on $i\lambda=i(l\otimes \R)$.
The tangent space of ${\cal Z}/{\cal K}_\gamma$ is identified with the kernel of the map
$$
F\colon \Omega^1(i\lambda)\times\Gamma(S^+)\times\Omega^+(i\lambda) \to \Omega^0(i\lambda)\times\Gamma(S^-)\times\Omega^+(\lambda)
$$
defined by 
$$
F(\alpha,\phi,\eta) = (-2 d^*\alpha + f(\phi,\Phi), D\hat{\mu}(\alpha,\phi,\sigma)).
$$
Then, it follows from  the standard argument  (e.g. \cite{Nicolaescu}, \S 1.5.2) that $\pi$ is Fredholm,  
and the index of $\pi$ is given by the sum of the index of $D_A$ and the index of the restriction of $d^*+d^+$ to the $L^2$-complement of the space of harmonic $1$-forms.
\end{proof}

By the Sard-Smale theorem, for a generic choice of $(t,\eta)\in T^{b_1(X;\lambda)}\times \Omega^+(i\lambda)$, we obtain a $d^\prime$-dimensional manifold
$$
\M^\prime(t,\eta)=\pi^{-1}(t,\eta)\subset\CC/\K_{\gamma}.
$$

The quotient group $\G_0/\K_{\gamma}\cong \{\pm 1\}$ still acts on $\M^\prime(t,\eta)$, and there exists a unique fixed point.
Then the quotient space $\M(t,\eta)=\M^\prime(t,\eta)/\{\pm 1\}$ is a $V$-manifold which has a unique quotient singularity.
Around the singularity, we can take an open neighborhood $N$ of the form of a cone of $\RP^{d^\prime-1}$.
Removing $N$ from $\M(t,\eta)$, we obtain a $d^\prime$-dimensional compact manifold 
$$
\overline{\M}(t,\eta) = \M(t,\eta)\setminus N,
$$
whose boundary is $\RP^{d^\prime-1}$.

Now, we prove the lemma which will be a key
 point of our argument.
\begin{Lemma}\label{lem:nonposi}
If $b_+(X;l)=0$, then $d^\prime\leq 0$.
\end{Lemma}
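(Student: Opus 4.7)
The plan is to argue by contradiction, assuming $d^\prime\geq 1$ and deriving an obstruction from the Stiefel--Whitney numbers of the boundary of the cut-down moduli space. The underlying picture is the familiar one from the Seiberg--Witten proof of Donaldson's theorem, except that the gauge stabilizer is $\{\pm 1\}$ rather than $\U(1)$, so the link of the reducible is $\RP^{d^\prime-1}$ instead of $\CP^{d^\prime/2-1}$.

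First I would analyze the fixed set of the residual involution $\G_0/\K_\gamma\cong\{\pm 1\}$ on $\M^\prime(t,\eta)$. Because $b_+(X;\lambda)=0$, the restriction of $d^+$ to $(\ker d)^\perp$ is a linear isomorphism onto $\Omega^+(i\lambda)$, so for every $\eta$ the reducible equation $F_{A_0}^++d^+b=\eta$ has a unique solution $b\in(\ker d)^\perp$; together with the harmonic representative fixed by $t\in T^{b_1(X;\lambda)}$, this produces exactly one reducible orbit in $\M^\prime(t,\eta)$. That orbit is the unique fixed point of the $\{\pm 1\}$-action. From the deformation complex, the tangent space at the reducible is identified with $\ker D_{A_{\rm red}}$, a real vector space of dimension $d^\prime$ (by surjectivity of $D_A^*$ coming from the defining property of $\cal U$ combined with $\ind D_{A_0}=d^\prime$), and $-1\in\G_0$ acts on it by negation of the spinor. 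Hence the fixed point is locally modelled on the antipodal action of $\{\pm 1\}$ on $\R^{d^\prime}$.

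Removing an invariant open ball $B$ about the fixed point and taking the quotient by $\{\pm 1\}$ then gives $\overline{\M}(t,\eta)$ as a compact smooth $d^\prime$-manifold with boundary $\RP^{d^\prime-1}$. The free $\{\pm 1\}$-cover $\M^\prime(t,\eta)\setminus B\to\overline{\M}(t,\eta)$ determines a real line bundle $\ell\to\overline{\M}(t,\eta)$ whose restriction to $\partial\overline{\M}(t,\eta)$ is the tautological bundle $\gamma\to\RP^{d^\prime-1}$.

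To finish I would invoke the bordism invariance of Stiefel--Whitney numbers in $\Omega_{d^\prime-1}^{\OO}(B\OO(1))$: the pair $(\overline{\M}(t,\eta),\ell)$ exhibits $(\RP^{d^\prime-1},\gamma)$ as null-bordant as a manifold equipped with a real line bundle, forcing every Stiefel--Whitney number to vanish; but
$$
\langle w_1(\gamma)^{d^\prime-1}, [\RP^{d^\prime-1}]\rangle = 1\in\Z/2,
$$
contradicting $d^\prime\geq 1$. The main technical hurdle is the first step: verifying uniqueness of the reducible and pinning down the local antipodal model at it, so that the blowup genuinely produces a smooth manifold with boundary $\RP^{d^\prime-1}$ carrying an extension of the tautological line bundle. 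Once this identification is in hand, the contradiction is a standard piece of unoriented bordism theory.
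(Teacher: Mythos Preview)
Your argument is correct and follows essentially the same strategy as the paper: both produce the compact manifold $\overline{\M}(t,\eta)$ with boundary $\RP^{d^\prime-1}$ and derive a contradiction from the nonvanishing of $w_1^{d^\prime-1}$ on that boundary. The only difference is packaging---the paper phrases the obstruction via the inclusion $\overline{\M}(t,\eta)\subset\CC^*/\G_0\simeq\RP^\infty\times T^{b_1(X;\lambda)}$ and a class in $H^{d^\prime-1}(\CC^*/\G_0;\Z/2)$ pairing nontrivially with $[\partial\overline{\M}(t,\eta)]$, whereas you express the same obstruction intrinsically as a Stiefel--Whitney number via the line bundle $\ell$, which lets you bypass \propref{prop:B}.
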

\begin{proof}
Suppose $d^\prime=\ind D_{A_0}>0$.
We obtain a $d^\prime$-dimensional compact manifold $\overline{\M}(t,\eta)$ as above.
Note that $\overline{\M}(t,\eta)  \subset \CC^*/\G_0$ and $\partial(\overline{\M}(t,\eta))\cong\RP^{d^\prime-1}$.
Then there exists a class $A\in H^{d^\prime-1}(\CC^*/\G_0;\Z/2) = H^{d^\prime-1}(\RP^\infty\times T^{b_1(X;\lambda)};\Z/2)$ so that $\langle A, [\partial(\overline{\M}(t,\eta))]\rangle\neq 0$. 
This is a contradiction.
\end{proof}

%
%
\section{Proof of \thmref{thm:main}}\label{sec:proof}
%
%
In this section, we complete the proof of \thmref{thm:main}.
Suppose that $X$ and $l$ satisfy the conditions in \thmref{thm:main}.
\begin{Definition}
An element $w$ in a lattice $L$ is called characteristic if $w\cdot v\equiv v\cdot v$ mod $2$  for any $v\in L$.
\end{Definition}
\begin{Lemma}\label{lem:int}
The second Stiefel-Whitney class $w_2(X)$ has a lift in $H^2(X;l)$.
Moreover, for every class $c\in H^2(X;l)$ whose class $[c]$ in $H^2(X;l)/\text{\rm torsion}$ is a characteristic element of $Q_{X,l}$, there exists a torsion class $\delta\in H^2(X;l)$ such that $c+\delta$ is a lift of $w_2(X)$.
\end{Lemma}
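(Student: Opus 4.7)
The plan is to prove the two assertions in turn: first, to produce a lift of $w_2(X)$ in $H^2(X;l)$; second, to deduce the ``moreover'' clause from the theory of characteristic elements of a unimodular lattice.

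For the first claim, I reduce to a Bockstein vanishing. The long exact sequence of the coefficient extension $0\to l\xrightarrow{\cdot 2}l\to\Z/2\to 0$ shows that $w_2(X)\in H^2(X;\Z/2)$ lifts to $H^2(X;l)$ if and only if the twisted Bockstein $\beta_l(w_2(X))\in H^3(X;l)$ vanishes. By condition~(2) there is a torsion lift $\tau_0\in H^2(X;l)$ of $w_1(\lambda)^2$, so $\beta_l(w_1(\lambda)^2)=0$; by additivity it is enough to show $\beta_l(w_2(X)+w_1(\lambda)^2)=0$. This last vanishing is precisely the criterion, via \propref{prop:spin-p} and \remref{rem:E}, for the existence of an $\OO(2)$-bundle $E\to X$ with $\det E\cong\lambda$ admitting a $\Spin^{c_-}$-structure on $(X,E)$: indeed $\Spin^{c_-}$-existence requires $w_2(E)=w_2(X)+w_1(\lambda)^2$, and the realized values of $w_2(E)$ for bundles with $w_1(E)=w_1(\lambda)$ are precisely the image of $\rho_2\colon H^2(X;l)\to H^2(X;\Z/2)$. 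Granting such an $E$, the class $c_0:=\tilde c_1(E)-\tau_0\in H^2(X;l)$ satisfies $\rho_2(c_0)=w_2(E)-w_1(\lambda)^2=w_2(X)$, the desired lift.

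For the ``moreover'' clause, observe that $c_0$ is automatically characteristic for $Q_{X,l}$ by Wu's formula. For any $v\in H^2(X;l)$, the mod-$2$ reduction of $c_0\cdot v\in H^4(X;\Z)=\Z$ equals $w_2(X)\cup\rho_2(v)=Sq^2\rho_2(v)=\rho_2(v)^2=\rho_2(v\cdot v)$, so $c_0\cdot v\equiv v\cdot v\pmod 2$, showing that $[c_0]$ is a characteristic element of the unimodular lattice $H^2(X;l)/\text{torsion}$. The classical fact that characteristic elements of a unimodular form comprise a coset of $2L$ then gives, for any other characteristic $c\in H^2(X;l)$, that $[c]-[c_0]=2[\mu]$ for some $\mu\in H^2(X;l)$. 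Hence $c-c_0-2\mu$ is torsion, and setting $\delta:=-(c-c_0-2\mu)$ yields $c+\delta=c_0+2\mu$, whose mod-$2$ reduction is $\rho_2(c_0)=w_2(X)$, as required.

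The principal obstacle is producing the $\Spin^{c_-}$-structure, i.e.\ establishing $\beta_l(w_2(X)+w_1(\lambda)^2)=0$. My approach is to express the mod-$2$ reduction of $\beta_l$ as a twisted first Steenrod operation $\rho_2\circ\beta_l = Sq^1+w_1(\lambda)\cup(-)$, obtained by comparing the exact sequence $0\to l\xrightarrow{\cdot 2}l\to\Z/2\to 0$ with $0\to\Z/2\to l\otimes\Z/4\to\Z/2\to 0$. Substituting $\alpha=w_2(X)+w_1(\lambda)^2$ and invoking $Sq^1 w_2(X)=w_3(X)=0$ (from $W_3(X)=0$ since $X$ is oriented) together with $w_1(\lambda)^3=0$ (forced by condition~(2) applied to $w_1(\lambda)^2$), the residual obstruction $w_1(\lambda)\cup w_2(X)\in H^3(X;\Z/2)$ must vanish, which follows from condition~(1) by a duality argument. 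The final step, promoting the mod-$2$ vanishing of $\rho_2\circ\beta_l$ to the integral vanishing of $\beta_l$ itself, uses that $\beta_l(w_2(X)+w_1(\lambda)^2)$ is $2$-torsion in $H^3(X;l)\cong H_1(X;l)$ and invokes the torsion structure of the latter under the hypotheses.
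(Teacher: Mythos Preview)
Your argument has two genuine gaps. First, the claim that $w_1(\lambda)\cup w_2(X)=0$ follows from the definiteness of $Q_{X,l}$ is unsubstantiated; definiteness is a statement about the pairing on $H^2(X;l)/\text{torsion}$ and gives no obvious control over this degree-$3$ class in $\Z/2$-cohomology. Second, even granting $\rho_2\bigl(\beta_l(w_2(X)+w_1(\lambda)^2)\bigr)=0$, you cannot conclude $\beta_l(w_2(X)+w_1(\lambda)^2)=0$ in $H^3(X;l)$: a $2$-torsion class with vanishing mod-$2$ reduction is of the form $2y$ with $4y=0$, and this need not vanish without extra information on the $2$-primary torsion of $H^3(X;l)\cong H_1(X;l)$, which the hypotheses of \thmref{thm:main} do not provide. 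Thus your route to producing a single lift $c_0$ is incomplete, and everything downstream rests on it.

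The paper's proof is entirely different and avoids these obstructions. It never tries to build a $\Spin^{c_-}$-structure or compute Bocksteins; instead it applies the (twisted) universal coefficient theorem to set up a commuting ladder with rows
\[
\Ext(H_1(X;l),\Z)\hookrightarrow H^2(X;l)\twoheadrightarrow\Hom(H_2(X;l),\Z),\qquad
\Ext(H_1(X;l),\Z/2)\hookrightarrow H^2(X;\Z/2)\twoheadrightarrow\Hom(H_2(X;l),\Z/2).
\]
Wu's formula says $w_2(X)$ maps to the squaring homomorphism $\beta\mapsto\langle b\cup b,[X]\rangle\bmod 2$ in $\Hom(H_2(X;l),\Z/2)$, and the characteristic property of $c$ says $\rho_1(c)$ maps to the same thing. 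Hence $w_2(X)-\rho_1(c)$ lies in the image of $\Ext(H_1(X;l),\Z/2)$, and since $\Ext(H_1(X;l),\Z)\to\Ext(H_1(X;l),\Z/2)$ is surjective it lifts to a torsion class $\delta\in H^2(X;l)$. Note that conditions (1) and (2) of \thmref{thm:main} are not used at all; the lemma holds for any nontrivial $l$.
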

\begin{proof}
({\it Cf.} \cite{AL}.)
Note that $l^*=l$ and $\Hom (l;\Z/2)=\Z/2$.
By the universal coefficient theorem, we have a commutative diagram,
$$
\begin{CD}
 \Ext(H_1(X;l),\Z) @>>> H^2(X;l)@>{h_1}>>\Hom(H_2(X;l),\Z)\\
 @VV{\rho_0}V @VV{\rho_1}V @VV{\rho_2}V \\
 \Ext(H_1(X;l),\Z/2) @>{k}>> H^2(X;\Z/2)@>{h_2}>>\Hom(H_2(X;l),\Z/2).
\end{CD}
$$
Note that the homomorphisms $h_1$ and $h_2$ are given as follows:
Let $[X]\in H^4(X;\Z)$ and $[X]_2\in H^4(X;\Z/2)$ be the fundamental classes in coefficients $\Z$ and $\Z/2$.
For $a\in  H^2(X;l)$, $a^\prime\in H^2(X;\Z/2)$ and $\beta\in H_2(X;l)$, 
$$
h_1(a)(\beta) = \langle a\cup b, [X]\rangle,\quad
h_2(a^\prime)(\beta) = \langle a^\prime\cup b, [X]_2\rangle,
$$
where $b\in H^2(X;l)$ is the Poincar\'{e} dual of $\beta$, $a^\prime\cup b$ is defined by the cup product 
$$
H^2(X;\Z/2)\otimes H^2(X;l)\to H^2(X;l\otimes \Z/2)=H^2(X;\Z/2).
$$
Let $S\in \Hom(H_2(X;l),\Z/2)$ be the homomorphism defined by 
$$
S(\beta) = \langle b\cup b, [X]\rangle \mod 2,
$$
where $\beta\in H_2(X;l)$ and $b\in H^2(X;l)$ is the Poincar\'{e} dual of $\beta$.
If $c\in H^2(X;l)$ satisfies the assumption of the lemma, then $h_2\rho_1(c) = \rho_2h_1(c)=S(\gamma)$, where $\gamma$ is  the Poincar\'{e} dual of $c$.
On the other hand, $h_2(w_2(X)) = S$ by Wu's formula.
Therefore $h_2(w_2(X) - \rho_1(c))=0$, and there exists a class $\delta^\prime\in\Ext(H_1(X;l);\Z/2)$ such that  
$$
w_2(X) - \rho_1(c)=k(\delta^\prime).
$$
Since $\rho_0$ is surjective, there exists a lift $\delta\in \Ext(H_1(X;l);\Z)$ such that $\rho_0(\delta) = \delta^\prime$, and this is a required $\delta$.
\end{proof}

\begin{Theorem}\label{thm:ineq}
Let $X$ be a closed, connected, oriented smooth $4$-manifold. 
Suppose we have a nontrivial $\Z$-bundle $l\to X$ satisfying $b_+(X;l)=0$.
Let $\lambda=l\otimes \R$.
Then, for every cohomology class $C\in H^2 (X;l)$ which satisfies $[C]_2 + w_1(\lambda)^2=w_2(X)$, where $[C]_2$ is the mod $2$ reduction of $C$, the inequality $|C^2|\geq b_2(X;l)$ holds.
\end{Theorem}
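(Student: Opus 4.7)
The plan is to realize $C$ as the twisted first Chern class of the $\OO(2)$-bundle underlying a $\Spin^{c_-}$-structure, run the $\Pin^{-}(2)$-monopole machinery of Section~4 to obtain a virtual-dimension bound, and convert this bound into the stated inequality via the multiplicativity of the signature for the double cover $\pi\colon\X\to X$.

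To set up the gauge theory I would use \remref{rem:E} to choose an $\OO(2)$-bundle $E\to X$ with $\det E=\lambda$ and $\ct_1(E)=C$, so that $w_1(E)=w_1(\lambda)$ and $w_2(E)=[C]_2$. The hypothesis then reads $w_2(E)+w_1(E)^2=w_2(TX)$, so by \propref{prop:spin-p} the pair $(X,E)$ admits a $\Spin^{c_-}$-structure. Since $X$ is connected and $l$ nontrivial, $b_0(X;l)=0$, and formula \eqref{eq:d} together with the Fredholm-index computation for the cut-down map (in the paragraph preceding \lemref{lem:nonposi}) gives
$$
d^\prime \;=\;\frac{1}{4}\bigl(C^2-\sign(X)\bigr).
$$
The hypothesis $b_+(X;l)=0$ then forces $d^\prime\leq 0$ by \lemref{lem:nonposi}, hence $C^2\leq \sign(X)$.

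It remains to rewrite $\sign(X)$. By Hirzebruch multiplicativity of the signature for the unramified degree-$2$ cover $\pi$, $\sign(\X)=2\sign(X)$. Since $\iota$ is orientation-preserving, the $\pm 1$-eigenspace decomposition
$$
H^2(\X;\R)\;=\;H^2(X;\R)\oplus H^2(X;\lambda)
$$
is orthogonal with respect to $Q_{\X}$, and $Q_{\X}$ restricts on the two summands to $2Q_X$ and $2Q_{X,l}$ respectively. Therefore $\sign(\X)=\sign(X)+\sign(X;l)$, which combined with Hirzebruch gives $\sign(X)=\sign(X;l)$. The hypothesis $b_+(X;l)=0$ makes $Q_{X,l}$ negative semi-definite with $\sign(X;l)=-b_2(X;l)$, hence $\sign(X)=-b_2(X;l)$. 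Combined with $C^2\leq 0$, we conclude
$$
|C^2|\;=\;-C^2\;\geq\;-\sign(X)\;=\;b_2(X;l),
$$
as required.

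The only step that requires any real input is the identity $\sign(X)=-b_2(X;l)$, coming from Hirzebruch multiplicativity together with the $\iota^*$-eigenspace decomposition of the intersection form on $\X$; given this, the theorem is an immediate consequence of \lemref{lem:nonposi} applied to the $\Spin^{c_-}$-structure constructed from $C$.
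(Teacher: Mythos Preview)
Your proof is correct and follows essentially the same approach as the paper: construct a $\Spin^{c_-}$-structure with $\tilde c_1(E)=C$ via \propref{prop:spin-p}, apply \lemref{lem:nonposi} to get $d'=\tfrac14(C^2-\sign(X))\le 0$, and use $\sign(X)=-b_2(X;l)$ to conclude. The only difference is that the paper states $\sign(X)=-b_2(X;l)$ without comment, whereas you supply a justification via multiplicativity of the signature for the double cover together with the $\iota^*$-eigenspace decomposition of $Q_{\tilde X}$; this extra detail is correct and helpful.
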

\begin{proof}
If $b_+(X;l)=0$, then $C^2\leq 0$ for $C\in H^2 (X;l)$ and $\sign(X) = -b_2(X;l)$.
For $C\in H^2 (X;l)$ satisfying the assumption, there is a $\Spin^{c_-}$-structure on $X$ whose $\OO(2)$-bundle $E$ has $\tilde{c}_1(E) =C$ by \propref{prop:spin-p}.
Let us consider the $\Pin^-(2)$-monopole moduli space on the $\Spin^{c_-}$-structure. 
Then \lemref{lem:nonposi} implies that $d^\prime=1/4(C^2-\sign(X)) =1/4(C^2 +b_2(X;l)) \leq 0$.
Thus, $|C^2|\geq b_2(X;l)$ holds.
\end{proof}

To complete the proof of \thmref{thm:main}, we invoke the following theorem due to Elkies.
\begin{Theorem}[Elkies\cite{Elkies}]\label{thm:Elkies}
Let $L$ be a definite unimodular form over $\Z$.
If every characteristic element $w\in L$ satisfies $|w^2|\geq \rank L=n$, then $L$ is isomorphic to the standard form $\Z^n$.
\end{Theorem}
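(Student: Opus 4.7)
The plan is to prove Elkies' theorem by combining an analytic bound on characteristic norms via theta series with an inductive splitting by unit vectors.

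First I would replace $L$ by $-L$ if necessary, so that $L$ is positive definite, and observe that the hypothesis forces $L$ to be \emph{odd}: if every lattice vector had even norm, then $0 \in L$ would be characteristic with $0 \cdot 0 = 0 < n$, contradicting the assumption.

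The core analytic step is the bound $m(L) \leq n$ for any positive definite odd unimodular lattice $L$ of rank $n$, where
\[
m(L) = \min\{w \cdot w : w \in L \text{ characteristic}\}.
\]
I would establish this using the lattice theta series and its shadow,
\[
\Theta_L(\tau) = \sum_{v \in L} q^{(v,v)/2}, \qquad
\sigma_L(\tau) = \sum_{w \text{ char.}} q^{(w,w)/8}, \qquad q = e^{2\pi i \tau}.
\]
For $L$ odd unimodular, $\sigma_L$ is a modular form of weight $n/2$ on a suitable congruence subgroup, related to $\Theta_L$ by Poisson summation under $\tau \mapsto -1/\tau$. Nonnegativity of the $q$-expansion coefficients of both series, together with a comparison to the known expansions for $L = \Z^n$, forces the smallest exponent $(w,w)/8$ of the shadow series to be at most $n/8$; that is, $m(L) \leq n$.

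Combined with the hypothesis, one obtains $m(L) = n$. A refinement of the coefficient comparison at this extremal value shows that $L$ must contain a vector $e$ of norm $1$. Given such $e$, the sublattice $\Z e$ is a unimodular direct summand (orthogonal projection onto $\Z e$ sends $L$ to $\Z e$ integrally), so $L = \Z e \oplus e^{\perp}$ with $e^{\perp}$ a positive definite unimodular lattice of rank $n-1$. A parity check shows that whenever $w'$ is characteristic in $e^{\perp}$, the element $w' + e$ is characteristic in $L$, so $(w',w') = (w' + e,\, w' + e) - 1 \geq n - 1$. By induction on the rank, $e^{\perp} \cong \Z^{n-1}$, and hence $L \cong \Z^n$; the base case $n \leq 1$ is immediate.

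The main obstacle is the analytic bound $m(L) \leq n$ together with the rigidity extraction of a unit vector in the equality case. Both rely on careful manipulation of the shadow theta series and its modularity; the rigidity step in particular does not reduce to a purely formal argument and requires exploiting the fine structure of short characteristic vectors. Once these are in hand, the inductive splitting above is elementary.
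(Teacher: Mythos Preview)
The paper does not prove this theorem; it is quoted from \cite{Elkies} and invoked as a black box in the proof of \thmref{thm:main}. Your outline is the correct skeleton of Elkies' original argument via the shadow theta series, and your reduction to the odd case is right.

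One structural remark: the induction you set up is valid but superfluous. The modular-form comparison in the equality case $m(L)=n$ already forces $\Theta_L=\Theta_{\Z^n}$, so $L$ contains exactly $2n$ vectors of norm~$1$; since distinct non-proportional unit vectors in a positive-definite integral lattice are automatically orthogonal (Cauchy--Schwarz gives $|e\cdot f|<1$, hence $e\cdot f=0$), these span a unimodular sublattice $\Z^n\subset L$, which by rank must be all of $L$. Thus the rigidity conclusion and the unit-vector extraction emerge from the same analytic step, with no need to recurse. Your identification of the shadow-theta inequality and its equality analysis as the genuine content of the theorem is accurate; that is exactly where the work lies in \cite{Elkies}.
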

\begin{proof}[Proof of \thmref{thm:main}]
We can assume that $b_+(X;l)=0$ by reversing the orientation if necessary.
Under the assumptions of  \thmref{thm:main}, Wu's formula, \lemref{lem:int} and \thmref{thm:ineq} imply that every characteristic element $C$ of $Q_{X,l}$ satisfies $|C^2|\geq \rank Q_{X,l}$. 
Then, by Elkies' theorem, $Q_{X,l}$ should be the standard form.
\end{proof}

%
%
\section{Proof of \thmref{thm:10/8}}\label{sec:10/8}
%
%
In this section, we prove \thmref{thm:10/8} by using the technique of the finite dimensional approximation \cite{BF} and equivariant $K$-theory as in Bryan's paper \cite{Bryan}.
We also give an alternative proof of \thmref{thm:main} by the same technique.

\subsection{The $\Pin^-(2)$-monopole map}\label{subsec:mu}

Let us introduce the $\Pin^-(2)$-monopole map $\tilde{\mu}$ defined as follows ({\it Cf.} \cite{BF}, p.11):
\begin{gather*}
\begin{split}
\tilde{\mu}\colon \A(E)\times(\Gamma(S^+)\oplus &\Omega^1(X;i\lambda))\\
\to& \A(E)\times(\Gamma(S^-)\oplus \Omega^+(X;i\lambda)\oplus \Omega^0(X;i\lambda)\oplus H^1(X; i\lambda),\end{split}\\
(A,\phi,a)\mapsto (A, D_{A+a}\phi, F^+_{A} + d^+a - q(\phi), d^*a, a_{harm}),
\end{gather*}
where $a_{harm}$ is the harmonic part of $a$.
When $\tilde{c}_1(E)\neq 0$, let $\G=\G_0$ act trivially on forms.
When $\tilde{c}_1(E)= 0$, let $\G$ act on forms by multiplication of $\pm 1$ via the projection $\G\to \G/\G_0\cong \{\pm 1\}$.
Then the monopole map $\tilde{\mu}$ is $\G$-equivariant.

Let us choose a reference connection $A$ and take a subgroup $\K_{\gamma}\subset \G_0$ as in \subsecref{subsec:B}. 
The subspace $A+\ker d\subset \A(E)$ is preserved by the action of $\K_{\gamma}$, and the $\K_{\gamma}$-action is free.
The quotient space is isomorphic to the torus $T^{b_1(X;l)}=H^1(X;\lambda)/H^1(X;l)$.
Let $\VV$ and $\WW$ be the quotient spaces, 
\begin{align*}
\VV=& (A+\ker d)\times (\Gamma(S^+)\oplus \Omega^1(X;i\lambda))/\K_{\gamma},\\
\WW=&(A+\ker d) \times(\Gamma(S^-)\oplus \Omega^+(X;i\lambda)\oplus \Omega^0(X;i\lambda)\oplus H^1(X; i\lambda))/\K_{\gamma}.
\end{align*}
Then $\VV$ and $\WW$ are bundles over $T^{b_1(X;l)}$.
Dividing $\tilde{\mu}$ by $\K_{\gamma}$, we obtain a fiber preserving map
$$
\mu=\tilde{\mu}/\K_{\gamma}\colon \VV\to \WW.
$$
Then $\G_0/\K_{\gamma}=\{\pm 1\}$ still acts on $\VV$ and $\WW$, and $\mu$ is a $\Z/2$-equivariant map in general. 
If $\tilde{c}_1(E)=0$,  take a flat connection on $E\cong\underline{\R}\oplus\lambda$ as a reference connection which is the product connection of flat connections on $\underline{\R}$ and $\lambda$.
Then $\mu$ is a $\Z/4$-equivariant map.

For a fixed $k>4$, we take the fiberwise $L^2_k$-completion of $\VV$ and the fiberwise $L^2_{k-1}$-completion of $\WW$.
Then we can prove the map $\mu$ is a Fredholm proper map as in \cite{BF}.
In fact, we can readily prove the following by using the Weitzenb\"{o}ck formula \eqref{eq:Weitz}. 
\begin{Proposition}[\cite{BF}]
Preimages $\mu^{-1}(B)\subset\VV$ of bounded disk bundles $B\subset \WW$ are contained in bounded disk bundles.
\end{Proposition}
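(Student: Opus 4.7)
The plan is to adapt the classical Bauer--Furuta boundedness argument for the ordinary Seiberg--Witten monopole map to our equivariant setting. The argument transfers essentially verbatim because, by \propref{prop:MM}, the $\Pin^-(2)$-monopole map is the $I$-invariant part of the Seiberg--Witten monopole map on the double cover $\X$, and all the analytic inputs --- the Weitzenb\"ock formula, elliptic regularity, Sobolev multiplication --- are purely local and insensitive to the nontrivial bundle $\lambda$. Suppose $(A,\phi,a)\in\mu^{-1}(B)$ for a bounded disk bundle $B\subset\WW$; then the four nontrivial output components are uniformly bounded in their respective Sobolev norms, and the base coordinate $A\bmod\K_\gamma$ lies in a compact region of the torus $T^{b_1(X;l)}$. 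The goal is an a priori $L^2_k$-bound on $(\phi,a)$.

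First I would establish a $C^0$-bound on $\phi$. Substituting the relation $F^+_{A+a} = q(\phi) + \eta - d^+a$ (where $\eta$ denotes the bounded curvature output) into the Weitzenb\"ock identity \eqref{eq:Weitz}, and applying the maximum principle to $|\phi|^2$, one uses the favorable sign of the cubic term $\langle\rho(q(\phi))\phi,\phi\rangle = \tfrac14|\phi|^4$ to absorb it and recover the standard pointwise bound $\|\phi\|_{C^0}\leq C(R)$, exactly as in \cite{Morgan}, Chapter 5.

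Once $\phi$ is controlled in $C^0$, the nonlinearity $q(\phi)$ is bounded in every $L^p$. I would then bound $a$ via the twisted Hodge decomposition $a = a_{\mathrm{harm}} + d\xi + d^*\omega$: $a_{\mathrm{harm}}$ is an explicit bounded output, $d^*a$ is bounded in $L^2_{k-1}$, and $d^+a = (\text{output}) - F^+_A + q(\phi)$ is bounded in all $L^p$ (since $F^+_A$ is bounded because $A$ lies in a bounded subset of $A_0+\ker d$). Elliptic regularity for $d\oplus d^*$ on the orthogonal complement of the harmonics then yields an $L^2_1$-bound on $a$. Finally I would bootstrap: from $\phi\in C^0$, $a\in L^2_1$, and the bounded Dirac output $D_{A+a}\phi$, elliptic regularity for $D_A$ gives $\phi\in L^2_2$; feeding this back improves $a$ by one derivative; and alternating the two elliptic estimates, using Sobolev multiplication $L^2_j\cdot L^2_j\hookrightarrow L^2_j$ for $j\geq 2$ in dimension four, reaches the desired $L^2_k$-bounds on $(\phi,a)$.

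The only subtle point --- hardly an obstacle --- is to verify that twisting by the flat bundle $i\lambda$ does not spoil the above estimates. But the Weitzenb\"ock formula, the Hodge decomposition, and elliptic regularity hold verbatim for connections and forms with values in a flat real line bundle, and the sign in the quadratic absorption step of the maximum principle is intrinsic to the Seiberg--Witten quadratic form $q$; the nontriviality of $\lambda$ enters only through the compact base torus $T^{b_1(X;l)}$, which is irrelevant to the pointwise and elliptic estimates.
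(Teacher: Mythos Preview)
Your proposal is correct and follows exactly the approach the paper indicates: the paper does not give its own proof but simply states that the result follows readily from the Weitzenb\"ock formula \eqref{eq:Weitz} and cites \cite{BF}, and what you have written is precisely the standard Bauer--Furuta boundedness argument specialized to the $i\lambda$-twisted setting. One minor slip: from $F_A^+ + d^+a - q(\phi)=\eta$ you should get $F_{A+a}^+ = q(\phi)+\eta$, not $q(\phi)+\eta-d^+a$, but the rest of your argument uses the correct relation and is unaffected.
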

With this understood, we can construct a finite dimensional approximation $f\colon V\to W$ of $\mu$ between some finite rank vector bundles over $T^{b_1(X;l)}$ as in \cite{BF}.
The map $f$ is also a $\Z/2$(or $\Z/4$)-equivariant proper map.
\begin{Remark}
As mentioned in \remref{rem:10/8}(3), the $\Pin^-(2)$-monopole map can be identified with the $I$-invariant part of the ordinary Seiberg-Witten monopole map.
To see this, we need a little care on the gauge transformation group because the based gauge group which is used in the Seiberg-Witten monopole map is not compatible to $\cal K_\gamma$.
However, by constructing another subgroup compatible to $\cal K_\gamma$, we can obtain such an identification.
This issue will be discussed elsewhere.
\end{Remark}
\begin{Remark}
We can further develop $\Pin^-(2)$-monopole gauge theory. 
Many things in the Seiberg-Witten theory could also be considered in the $\Pin^-(2)$-monopole theory. 
Especially, we can define $\Pin^-(2)$-monopole invariants and their cohomotopy refinements. 
It would be also interesting to consider gluing formulas, Floer theory, and so on.
All of these issues are left to future researches.
\end{Remark}

\subsection{Equivariant $K$-theory}
We review several facts on equivariant $K$-theory, especially, the equivariant Thom isomorphism and tom Dieck's  character formula for the $K$-theoretic degree.
We refer to the readers \S3.3 of \cite{Bryan} and tom Dieck's book \cite{Dieck}, pp.254--255.

Let $V$ and $W$ be complex $\Gamma$-representations for some compact Lie group $\Gamma$.
Let $BV$ and $BW$ be $\Gamma$-invariant balls in $V$ and $W$ and let $f\colon BV\to BW$ be a $\Gamma$-map preserving the boundaries $SV$ and $SW$.
The $K$-group  $K_\Gamma(V)$ is defined as $K_\Gamma(BV,SV)$, and the equivariant Thom isomorphism theorem says that $K_\Gamma(V)$ is a free $R(\Gamma)$-module with the Bott class $\lambda(V)$ as generator, where $R(\Gamma)$ is the complex representation ring of $\Gamma$.
The map $f$ induces a homomorphism $f^*\colon K_\Gamma(W)\to K_\Gamma(V)$.
The $K$-theoretic degree $\alpha_f\in R(\Gamma)$ is uniquely determined  by the relation $f^*(\lambda(W))=\alpha_f\cdot\lambda(V)$. 

For $g\in\Gamma$, let $V_g$ and $W_g$ be the subspaces of $V$ and $W$ fixed by $g$, and let $V_g^{\bot}$ and $W_g^{\bot}$ be their orthogonal complements.
Let $f^g\colon V_g\to W_g$ be the restriction of $f$, and let $d(f^g)$ be the ordinary topological degree of $f^g$.
(Note that $d(f^g)=0$ if $\dim V_g\neq \dim W_g$.)
For $\beta\in R(\Gamma)$, let $\Lambda_{-1}\beta$ be the alternating sum $\sum(-1)^i\Lambda^i\beta$ of exterior powers.

Then tom Dieck's character formula \cite{Dieck} is,
\begin{equation}\label{eq:Dieck}
\tr_g(\alpha_f)=d(f^g)\tr_g(\Lambda_{-1}(W_g^{\bot} - V_g^{\bot})),
\end{equation}
where $\tr_g$ is the trace of the $g$-action.

\subsection{Proof of \thmref{thm:10/8}}
Suppose $X$  and a $\Z$-bundle $l$ satisfy the assumptions of \thmref{thm:10/8}.
Let $\lambda=l\otimes \R$ and $E=\underline{\R}\oplus\lambda$.
By the assumptions, a $\Spin^{c_-}$-structure $(P,\tau)$ for $(X,E)$ exists by \propref{prop:spin-p}, and we obtain a finite dimensional approximation $f\colon V\to W$ of the $\Pin^-(2)$-monopole map on $(P,\tau)$.
Since  $\tilde{c}_1(E)=0$, $f$ is a $\Gamma=\Z/4$-equivariant proper map.
If $b_1(X;l)>0$, by restricting $f$ to the fiber over the origin of $T^{b_1(X;l)}$ which is represented by the fixed reference connection $A$, $f$ can be assumed to be a $\Gamma$-map between (real) $\Gamma$-representation $V$ and $W$.
In fact, $f$ can be considered as a map of the following form,
$$
f\colon\tilde{\R}^m \oplus \C_1^{n+k}\to \tilde{\R}^{m+b} \oplus \C_1^{n},
$$
where $\Gamma=\Z/4$ acts on $\tilde{\R}$ by multiplication of $\pm 1$ via the surjection $\Z/4\to\{\pm 1\}$, and on $\C_k$ by multiplication of $g=\exp 2\pi\sqrt{-1}k/4$ for some fixed generator $g$ of $\Gamma$, $m,n$ are some positive integers, $b=b_+(X;l)$ and 
$$
k = \frac12\ind_\R D_A = \frac18(\tilde{c}_1(E)^2-\sign(X)) = - \frac{1}8\sign(X).
$$ 
As in \cite{Furuta}, take the complexification of $f$ as $f(u\otimes 1+v\otimes i) = f(u)\otimes 1 + f(v)\otimes i$.
Now the complexified $f$ is of the form,
$$
f\colon \tilde{\C}^m \oplus (\C_1\oplus \C_{-1})^{n+k}\to  \tilde{\C}^{m+b} \oplus (\C_1\oplus \C_{-1})^{n},
$$
where $\tilde{\C}=\tilde{\R}\otimes\C$. 
Let us apply tom Dieck's formula \eqref{eq:Dieck}.
Since $V_g=W_g=0$, $d(f^g)=1$. 
Then we have,
$$
\tr_g(\alpha_f) = \tr_g(\Lambda_{-1}(\tilde{\C}^b - (\C_1\oplus \C_{-1})^{k}) = \tr_g((\C-\tilde{\C})^b(2\C-\C_1\oplus \C_{-1})^{-k}) = 2^{b-k}.
$$
Since $\tr_g(\alpha_f)$ is an integer, we have $b-k\geq 0$.
Thus, \thmref{thm:10/8} is proved.
\begin{Remark}
In the proof of \thmref{thm:10/8}, we restrict the finite dimensional approximation $f$ to a fiber, and take the complexification of it.
Due to such modifications of $f$, the inequality we obtained might be somewhat weaker than expected.
One could improve the inequality by using the technique of \cite{FK}.
\end{Remark}

\subsection{An alternative proof of \thmref{thm:main}}
In this subsection, we give an alternative proof of \thmref{thm:main} by giving an alternative proof of \lemref{lem:nonposi}.
Suppose $X$ and $l$ satisfy the assumption of \thmref{thm:main}. 
We may assume $b_+(X;l)=0$ by reversing the orientation of $X$ if necessary.
Let $E$ be an $\OO(2)$-bundle such that $\det E=\lambda$, and suppose a $\Spin^{c_-}$-structure on $(X,E)$ is given.
Then we have a $\Gamma=\Z/2$-equivariant finite dimensional approximation $f\colon V\to W$ of the $\Pin^-(2)$-monopole map.
By restricting $f$ to a fiber if $b_1(X;l)>0$, we may assume $f$ has the form of
$$
f\colon \R^m\oplus \tilde{\C}^n\to  \R^m\oplus \tilde{\C}^{n+k},
$$
where $\Gamma\cong \{\pm 1\}$ acts on $\R$ trivially, and on $\tilde{\C}$ by multiplication of $\pm 1$, and $m,n$ are some positive integers, and 
$$
k = -\frac12\ind_\R D_A = - \frac18(\tilde{c}_1(E)^2-\sign(X)).
$$  
Take the complexification of $f$ and apply tom Dieck's formula \eqref{eq:Dieck} for $g=-1$.
Then,
$$
\tr_g(\alpha_f) = \tr_g((\C-\tilde{\C})^{2k}) = 2^{2k}.
$$
Therefore $k\geq 0$, and \lemref{lem:nonposi} is proved.

\end{document}